\newtheorem{theorem}{Theorem}[section]
\theoremstyle{plain}
\newtheorem{lemma}[theorem]{Lemma}
\newtheorem{proposition}[theorem]{Proposition}
\newtheorem{definition} {Definition}
\theoremstyle{definition}
\renewcommand{\d}{\/\mathrm{d}\/}
\def\S{\mathrm{S}}
\def\B{\mathcal{B}}
\def\K{\mathcal{K}}
\def\D{\mathrm{D}}
\def\no{\nonumber}
\def\u{\mathbf{u}}
\renewcommand{\d}{\/\mathrm{d}\/}
\newcommand{\leqnomode}{\tagsleft@true}
\newcommand{\reqnomode}{\tagsleft@false}
\newcommand{\Addresses}{{
		\footnote{

			\noindent \textsuperscript{1}Dipartimento di Matematica, Universita` degli Studi di Pavia and IMATI-C.N.R., Via Ferrata 5, 27100 Pavia, Italy.  \par\nopagebreak \noindent
			\textit{e-mail:} \texttt{tania.biswas@unipv.it}

			\noindent \textsuperscript{2}Dipartimento di Matematica, Universita` degli Studi di Pavia and IMATI-C.N.R., Via Ferrata 5, 27100 Pavia, Italy.  \par\nopagebreak \noindent
			\textit{e-mail:} \texttt{elisabetta.rocca@unipv.it}

			\noindent \textsuperscript{*}Corresponding author.

			\medskip\noindent
			{\bf Acknowledgments:}  Tania Biswas  would like to thank Department of Mathematics, University of Pavia for providing financial support and stimulating environment for the research and Prof. Elisabetta Rocca for fruitful discussions. This research was supported by the Italian Ministry of Education, University and Research (MIUR): Dipartimenti di Eccellenza Program (2018–2022) – Dept. of Mathematics “F. Casorati”, University of Pavia. In addition, this research has been performed in the framework of the project Fondazione Cariplo-Regione Lombardia  MEGAsTAR ``Matematica d'Eccellenza in biologia ed ingegneria come acceleratore di una nuova strateGia per l'ATtRattivit\`a dell'ateneo pavese''. The present paper also benefits from the support of the GNAMPA (Gruppo Nazionale per l'Analisi Matematica, la Probabilit\`a e le loro Applicazioni) of INdAM (Istituto Nazionale di Alta Matematica).}}}
\begin{document}
	
	\title[Long time dynamics]{Long time dynamics of a phase-field model of prostate cancer growth with chemotherapy and antiangiogenic therapy effects	\Addresses	}

	\author[Tania Biswas and Elisabetta Rocca]
	{Tania Biswas\textsuperscript{1*}, Elisabetta Rocca\textsuperscript{2}} 
	\maketitle

	\begin{abstract}
		We consider a phase-field model of prostate cancer growth with chemotherapy and antiangiogenic therapy effects which is introduced in \cite{LG}. It is comprised of  phase-field equation to describe tumor growth, which is coupled to a reaction-diffusion type equation for generic nutrient for the tumor. An additional equation couples the concentration of  prostate-specific antigen (PSA) in the prostatic tissue and it obeys a linear reaction-diffusion equation. The system completes with  homogeneous Dirichlet boundary conditions for the tumor variable and Neuman boundary condition for the nutrient and the concentration of PSA.  Here we investigate the long time dynamics of the model. We first prove that the initial-boundary value problem generates a strongly continuous semigroup on a suitable phase space that admits the global attractor in a proper phase space.  Moreover, we also discuss the convergence of a solution to a single stationary state and obtain a convergence rate estimate under some conditions on the coefficients.
\end{abstract}

	\keywords{\textit{Key words:} prostate cancer; chemotherapy; phase field; semilinear parabolic equations; Dissipativity; Global attractor; long time behavior. }
	
	Mathematics Subject Classification (2010): 35Q92, 92C50, 35K58, 35K51, 35B41, 37L30.

\section{Introduction}
Recently, mathematical modeling and simulation of tumor growth have turned out to be promising in cancer research and have gained a lot of interest. Prostate cancer (PCa) is the second most common cancer among men worldwide according to WHO (World Health Organization). Mostly PCa is an adenocarcinoma, that is a form of cancer that starts growing in the epithelial tissue of the prostate. In general the structure, behavior, and evolution of a tumor depend on the genetic alterations that initiate it and microenvironmental conditions of the tumor. Tumor-induced angiogenesis i.e. the growth of new blood vessels from pre-existing ones via chemical signals produced by the tumor, is a key process that can control the microenvironmental conditions of the tumor. Moreover if the most effective ways i.e. combination of prevention and regular screening for early detection to tackle the growth of tumor do not work, then chemotherapy plays an efficient role. Chemotherapy for PCa is mostly based on cytotoxic drugs (e.g., docetaxel, cabazitaxel), which slows down tumor growth in terms of inhibiting cell proliferation and promoting tumor cell death. On the other hand outside of the standard care for PCa, antiangiogenic therapies have been actively investigated. However antiangiogenic drug is counterintuitive when administered alone, but it showed significant benefit to the patients when combined with chemotherapy. 

In \cite{LG}, the authors have proposed a model of PCa growth and chemotherapy, where the critical nutrient controls the advancement of the tumor. The effect of chemotherapy and antiangiogenic drugs are incorporated by downregulating tumor net proliferation and reducing nutrient supply, respectively. The model also exhibits an output of the time evolution of serum PSA, which  represents as monitoring tumor evolution or the disease’s response to a particular treatment in the clinical practice. The PDE that describes the evaluation of concentration of PSA is a linear reaction-diffusion equation. The ones that are representing the evaluation of the tumor phase $\phi$ is an Allen-Cahn type equation and  the nutrient properties $\sigma$ is a a reaction diffusion equation.

The model is given by 
\begin{subequations}
		\begin{align}
		\phi_t -\lambda \Delta \phi + 2\phi (1-\phi) f(\phi,\sigma,u) &= 0, \label{phi}\ \text{ in }\ Q_T:=(0,T)\times\Omega,\\
				\sigma_t - \eta \Delta \sigma + \gamma_h \sigma + (\gamma_c-\gamma_h) \sigma \phi &= S_h + (S_c-S_h)\phi - s\phi, \label{sigma}\ \text{ in }\ Q_T,\\
		p_t - D \Delta p + \gamma_p p  &= \alpha_h + (\alpha_c-\alpha_h)\phi, \label{p}\ \text{ in }\ \Omega\times(0,T),\\
		\phi = 0, \ \frac{\partial \sigma}{\partial \nu} &= \frac{\partial p}{\partial \nu} = 0, \text{ on }\ \Sigma_T:=(0,T)\times\partial\Omega, \label{boundary}\\
		\phi(0) &= \phi_0, \  \ \sigma(0) = \sigma _0, \ p(0)=p_0, \text{ in } \ \Omega, \label{initial}
		\end{align}   
	\end{subequations}
where $\Omega$ is an open bounded subset of $\mathbb{R}^N, \ N\leq 3,$ with a sufficiently smooth boundary $\partial \Omega,$ $\frac{\partial}{\partial \nu}$ represents the outward normal derivative to $\partial \Omega$ and $T$ denote final time. Here, the phase field is denoted by $\phi$ and it takes the transitions from the value $\phi=0$ in the host tissue to $\phi=1$ in the tumor. The transition is smooth but steep and takes on a hyperbolic tangent profile in the direction perpendicular to the interface. In \eqref{phi} the nonlinearity $f$ takes the form $$f(\phi,\sigma,u) = M[1-2\phi-3(m(\sigma)-m_{ref}u)]$$ where $M>0$ is a real constant which denotes the tumor mobility. The function $m(\sigma) $ is usually called tilting function and in this model is defined by $$m(\sigma) = m_{ref} \left( \frac{\rho+A}{2} + \frac{\rho-A}{\pi}\text{arctan} \left( \frac{\sigma - \sigma_l}{\sigma_r}\right)  \right),$$ where $\rho$ and $A$ denote constant proliferation and apoptosis indices, respectively. The positive constants $\sigma_r$and $\sigma_l$ represents a reference and a threshold value for the nutrient concentration respectively. The function $u$ denotes the tumor-inhibiting effect of a cytotoxic drug. In the equation \eqref{sigma}, the positive constants $\eta$, $S_h$, $S_c$, denote the diffusion coefficient of the nutrient, the nutrient supply rate in the healthy tissue, the nutrient supply rate in the cancerous tissue respectively and $s$ is a given function yielding the reduction in nutrient supply caused by antiangiogenic therapy. Moreover $\gamma_h$, $\gamma_c$ are positive constants that represent the nutrient uptake rate in the healthy and cancerous tissue, respectively. In the equation \eqref{p}, the positive constants $\gamma_h$, $D$, $\alpha_h$ and $\alpha_c$ denote the decay rate in which the PSA is diffused through the prostatic tissue, the diffusion constant, the tissue PSA production rate of healthy and malignant cells respectively. In \cite{LG}, the authors have studied well-posedness of \eqref{phi}-\eqref{initial} and developed an algorithm to solve the equations numerically. They have shown computationally that complex tumor dynamics matches well with the previous observations in both computational and clinical studies. For more details regarding the model refer \cite{LG},  \cite{LG1} and references therein.

Now we mention briefly a short overview of mathematical literature regarding the tumor growth models. In the recent past, phase field method to model tumor growth have turned out to be promising and have gained lot of interest. Various tumor growth models of Allen-Cahn type and Cahn-Hilliard type have been investigated for example in \cite{CGH}, \cite{DRSS}, \cite{FGR}, \cite{CGRS}, \cite{XG}. Later, including the mechanical effects into the tumor growth model in terms of Darcy's law or Brinkman's equation have been studied for example in \cite{GL}, \cite{GLSS}, \cite{JWZ}, \cite{LTZ}, \cite{EG}. A further class of tumor growth model that also include chemotaxis, transport effects and angiogenesis have been subsequently introduced for example in \cite{GL1}, \cite{GLSS}, \cite{XG}. Various optimal control problems related to tumor growth models are extensively addressed in for example \cite{CRW}, \cite{CGMR}, \cite{CGRS1}, \cite{EK}, \cite{GLR}.

Long time behavior for the tumor growth models has been one of the far-reaching areas of applied mathematics. Regarding the existing literature on the aspect of long time behavior, we mention \cite{SFR}, \cite{CGRS}, \cite{FRP}, \cite{lauren}, \cite{HGZ} for phase field equations. Turning to the study of long time behavior for tumor growth model, only few contributions have been published so far. Among that we can quote \cite{CRW}, \cite{CGH}, \cite{JWZ}, \cite{MRS}, \cite{RS} for example.

In this paper we are interested to study the long time behavior of trajectories of the dynamical system associated to the model. We first show that the dynamical system associated to the model admits a global attractor. Next, we show that the $\omega$-limit set of each solution trajectory is not empty and only consists of steady state solutions $(\phi_\infty,\sigma_\infty,p_\infty)$. More precisely, we derive the steady state system corresponding to the model and we deduce that $\phi_\infty$ is equal to 0. Moreover, we prove that any global weak solution will converge to a steady state solution as $t \rightarrow \infty$ under suitable conditions on the coefficients and we also provide an estimate on the convergence rate.

The paper is organized as follows: In section 2, we discuss the mathematical preliminaries such as existence and uniqueness of weak solution related to the model. In section 3, we prove the existence of a global attractor in the suitable phase space and we prove the long time behavior results related to the phase-field model of prostate cancer growth in section 4.

\section{Functional Setting} 

Let us take $H=L^2(\Omega)$ and  $V=H^1(\Omega)$. The standard inner product and norm in $H$ are denoted by $(\cdot,\cdot)$ and $\| \cdot\|$ respectively. Since the embedding $V \subset H$ is dense and continuous, we identify $H$ with its dual space $H'$ with the above scalar product and we get the gelfand triplet $V \subset H \subset V'$. The duality pairing between a general banach space $X$ and its dual $X'$ will be denoted by $\langle \cdot, \cdot \rangle$. Let us set the following Sobolev spaces
\begin{align}
V_0=H^1_0(\Omega), \ V_0'=(H^1_0(\Omega))'=: H^{-1}_0(\Omega), \ V=H^1(\Omega), \ V'=(H^1(\Omega))', \no \\
W_0=H^2(\Omega) \cap H^1_0(\Omega) , \ W=\Big\{y\in H^2(\Omega); \frac{\partial y}{\partial \nu}=0 \text{  on  } \partial \Omega \Big\},
\end{align}
with the dense and compact injections $W_0 \subset V_0 \subset H \subset V_0'$ and $W \subset V \subset H \subset V'$. Here $H^1_0(\Omega)$ contains the elements of $H^1(\Omega)$ with null trace on the boundary $\partial \Omega$.

Let $A:V_0 \to V_0$ be the laplacian operator with dirichlet boundary condition. It should be noted that  $A^{-1} : H \to H$ is a self-adjoint compact operator on $H$ and by the classical \emph{spectral theorem}, there exists a sequence $\{\lambda_j\}$ with $0<\lambda_1\leq \lambda_2\leq \lambda_j\leq\cdots\to+\infty$
and a family of $\mathbf{e}_j \in \D(A)$ which is orthonormal in $H$ and such that $A\mathbf{e}_j =\lambda_j\mathbf{e}_j$. We know that $\u \in\D(A)$ can be expressed as $u=\sum\limits_{j=1}^{\infty}\langle \u,\mathbf{e}_j\rangle \mathbf{e}_j,$ so that $Au=\sum\limits_{j=1}^{\infty}\lambda_j\langle u,\mathbf{e}_j\rangle \mathbf{e}_j$. Thus, it is immediate that 
\begin{align}\label{lambda}
\|\nabla u\|^2=\langle A u,u\rangle =\sum_{j=1}^{\infty}\lambda_j|\langle u,\mathbf{e}_j\rangle|^2\geq \lambda_1\sum_{j=1}^{\infty}|\langle u,\mathbf{e}_j\rangle|^2=\lambda_1\|u\|^2.
\end{align}

Let us take the following assumptions
\begin{align}\label{ass 1}
u \in L^{\infty}(Q_T) \text{  and   } s \in L^{\infty}(Q_T) \text{ and } s\geq 0, \ \text{ a.e.  in } Q_T .
\end{align}
Assume moreever that the parameters $\lambda,\ \eta, \ D, \gamma_c, \ \gamma_h, \ \gamma_p, \  \alpha_c, \ \alpha_h, \  S_c, \ S_h$ are positive.
We first state the existence and uniqueness theorem which is proved in \cite{LG}. For that let us introduce the spaces
\begin{align*}
X_0 &= W^{1,2}(0,T;V_0') \cap C([0,T];H) \cap L^2(0,T;V_0), \\
X &= W^{1,2}(0,T;V') \cap C([0,T];H) \cap L^2(0,T;V),
\end{align*}

\begin{align*}
\chi_0 &= W^{1,2}(0,T;H) \cap C([0,T];V_0) \cap L^2(0,T;W_0), \\
\chi &= W^{1,2}(0,T;H) \cap C([0,T];V) \cap L^2(0,T;W).
\end{align*}

\begin{definition}\label{def1}
A solution to the system \eqref{phi}-\eqref{initial} is a triplet $(\phi,\sigma,p)$, with $\phi \in X_0 \cap L^{\infty}(Q_T), \ \sigma \in X, p \in X,$ which satisfies 
\begin{subequations}
		\begin{align}
&\int_0^T \langle \phi_t, \psi_1(t) \rangle_{V_0',V_0} \d t + \int_{Q_T} \{\lambda \nabla \phi \cdot \nabla \psi_1 + 2\phi (1-\phi) f(\phi,\sigma,u) \psi_1 \} \d x \d t =0, \label{phi1} \\
&\quad \int_0^T \langle\sigma_t, \psi_2(t) \rangle_{V',V} \d t + \int_{Q_T} \{\eta \nabla  \sigma \cdot \nabla \psi_2 + \gamma_h \sigma \psi_2 + \gamma_{ch} \sigma \phi \psi_2 \} \d x \d t,  \label{sigma1}  \\
&\quad = \int_{Q_T} \{S_h \psi_2 + (S_{ch}- s)\phi \psi_2\} \d x \d t, \text{ for all  } (\psi_1,\psi_2) \in L^2(0,T;V_0 \times V),  \no \\
&\int_0^T p_t, \psi(t) \rangle_{V',V} \d t  + \int_{Q_T} \{D \nabla p \cdot \nabla \psi + \gamma_p p \psi\} \d x \d t  \label{p1} \\ 
&\quad = \int_{Q_T} \{\alpha_h + \alpha_{ch} \phi\psi \} \d x \d t, \text{ for all  } \psi \in L^2(0,T;V) \no
\end{align}
\end{subequations}
and 
\begin{equation}\label{initial1}
(\phi,\sigma,p)(0) = (\phi_0,\sigma_0,p_0).
\end{equation}
\end{definition}
We recall now the results of \cite{LG}.
\begin{theorem}
Let 
\begin{align}
(\phi_0,\sigma_0,p_0) \in H \times H \times H, \\
0 \leq \phi_0 \leq 1 \ \text{ a.e. } x \in \Omega.
\end{align}
Then the system \eqref{phi}-\eqref{initial} has a unique solution $(\phi,\sigma,p)$ in the sense of Definition \ref{def1}, such that 
\begin{eqnarray}
0 \leq \phi(t,x) \leq 1 \ \text{ a.e. } (t,x) \in Q_T.
\end{eqnarray}
If $(\sigma_0,p_0) \in L^\infty(\Omega) \times L^\infty(\Omega),$ then $(\sigma,p) \in L^\infty(Q_T) \times L^\infty(Q_T).$ Moreover, if 
\begin{eqnarray}
\sigma_0(x) \geq 0, p_0(x) \geq 0 \ \text{ a.e. } x \in \Omega, \ \ s(t,x)  \leq S_c \ \text{ a.e. } (t,x) \in Q_T,
\end{eqnarray}
then we have 
\begin{eqnarray}
\sigma(t,x) \geq 0, p(t,x) \geq 0 \ \text{ a.e. } (t,x) \in Q_T.
\end{eqnarray}
If we take $(\phi_0,\sigma_0,p_0) \in V_0 \times V \times V,$ then the solution satisfies the following estimate 
\begin{align}
\|\phi\|_{X_0} + \|\sigma\|_{X} + \|p\|_{X} \leq C \left( \|\phi_0\|^2_{V_0} + \|\sigma_0\|^2_{V} + \|p_0\|^2_{V} + \|u\|^2_{L^2(0,T;H)} + \|s\|^2_{L^2(0,T;H)} +1 \right).
\end{align}
Moreover, the solution is continuous with respect to the data, that is, for two solutions $(\phi_i,\sigma_i,p_i)$ corresponding to the date $(\phi^i_0,\sigma^i_0,p^i_0,u_i,s_i),$ $i=1,2$, we have 
\begin{align}
&\|(\phi_1-\phi_2)\|_{H}^2 + \|(\sigma_1-\sigma_2)\|_{H}^2 + \|(p_1-p_2)\|_{H}^2  \no \\
&+ \|(\phi_1-\phi_2)\|_{L^2(0,T;V_0)}^2 + \|(\sigma_1-\sigma_2)\|_{L^2(0,T;V)}^2 + \|(p_1-p_2)\|_{L^2(0,T;V)}^2 \no \\
&\leq C \left( \|(\phi^1_0-\phi^2_0)\|^2_{H} + \|(\sigma^1_0-\sigma^2_0)\|_{H}^2 + \|(p^1_0-p^2_0)\|_{H}^2 + \|u_1-u_2\|^2_{L^2 (0,T;H)} + \|s_1-s_2\|^2_{L^2 (0,T;H)}  \right) \no,
\end{align}
for all $t \in [0,T].$
\end{theorem}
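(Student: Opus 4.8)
The plan is to construct a solution by a Faedo--Galerkin scheme, to read off the qualitative bounds ($0\le\phi\le1$, non-negativity and $L^\infty$ of $\sigma,p$) from the algebraic structure of the reaction terms via a maximum-principle argument, and to obtain both uniqueness and the continuous-dependence estimate from a single difference computation closed by Gronwall's lemma. First I would fix the approximation spaces: for the Dirichlet unknown $\phi$ I use the eigenbasis $\{\mathbf{e}_j\}$ of the operator $A$ introduced above, and for the Neumann unknowns $\sigma,p$ the eigenbasis of the Neumann Laplacian. Since $m(\sigma)$ is bounded (the tilting function involves only $\arctan$) and $u,s\in L^\infty(Q_T)$, it is convenient to work first with a globally bounded and Lipschitz truncation of the reaction $2\phi(1-\phi)f(\phi,\sigma,u)$ agreeing with it on the strip $\phi\in[0,1]$; this is natural because the factor $2\phi(1-\phi)$ already vanishes at $\phi=0$ and $\phi=1$. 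Projecting \eqref{phi1}--\eqref{p1} onto the first $n$ modes then yields a finite system of ODEs with Lipschitz right-hand side, hence a global-in-time approximate solution $(\phi_n,\sigma_n,p_n)$ by Cauchy--Lipschitz theory.

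Next I would derive the a priori estimates. Testing the $\phi$-equation with $\phi_n$, the $\sigma$-equation with $\sigma_n$ and the $p$-equation with $p_n$, controlling the coupling term $\int\sigma_n^2\phi_n$ by a Gagliardo--Nirenberg interpolation (here $N\le3$) and the remaining sources by Young's inequality, gives bounds uniform in $n$ in $L^\infty(0,T;H)\cap L^2(0,T;V)$ (and in $V_0$ for $\phi$); comparison in the equations then bounds $\phi_{n,t},\sigma_{n,t},p_{n,t}$ in the dual spaces $V_0'$ and $V'$. These bounds and the Aubin--Lions--Simon lemma yield, along a subsequence, weak-$\ast$ convergence in the energy spaces and, crucially, strong convergence of $\phi_n$ in $L^2(0,T;H)$ together with a.e.\ convergence. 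The a.e.\ convergence is exactly what is needed to pass to the limit in the truncated cubic reaction and in the bilinear coupling $\sigma_n\phi_n$, while the weak convergences handle the gradient and linear terms; the limit $(\phi,\sigma,p)$ solves the truncated problem in the sense of Definition~\ref{def1}.

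It remains to remove the truncation and to quantify the regularity. For the pointwise bound I test the limit $\phi$-equation with the negative part $-\phi^-$ and with $(\phi-1)^+$: since the (truncated) reaction vanishes outside $[0,1]$, both identities collapse to a linear differential inequality forcing these quantities to stay zero when they vanish initially, so $0\le\phi\le1$ and the truncated reaction coincides with $2\phi(1-\phi)f$; thus $(\phi,\sigma,p)$ solves the original system. The non-negativity and $L^\infty$ bounds for $\sigma$ and $p$ follow by the same negative-part technique applied to their own (now genuinely linear, given $\phi$) equations, using the sign hypothesis $s\le S_c$ for the $\sigma$-source and $0\le\phi\le1$ for the $p$-source. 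The quantitative $X_0\times X\times X$ estimate for data in $V_0\times V\times V$ is then just the energy estimate made explicit, its right-hand side controlled by the $V$-norms of the initial data and the $L^2(0,T;H)$-norms of $u$ and $s$.

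Finally, for uniqueness and continuous dependence I would take two solutions, write the equations for the differences $\phi_1-\phi_2$, $\sigma_1-\sigma_2$, $p_1-p_2$, and test each with itself; because both solutions lie in the invariant region $\phi\in[0,1]$ and $m$ is bounded and Lipschitz, the difference of the nonlinearities is Lipschitz there, so summing the three identities and absorbing cross terms by Young's inequality produces a differential inequality to which Gronwall applies, giving exactly the stated stability estimate (uniqueness being the case of identical data). The main obstacle, in my view, is not the energy bookkeeping but the interplay between the cubic term and the coupling: passing to the limit in $2\phi_n(1-\phi_n)f(\phi_n,\sigma_n,u)$ genuinely requires the strong a.e.\ convergence of $\phi_n$, so the compactness step carries the real weight; and establishing $0\le\phi\le1$ must be handled carefully, since $\phi\equiv1$ is incompatible with the Dirichlet condition and the comparison argument is clean only after the truncation has rendered the reaction globally Lipschitz and vanishing off $[0,1]$.
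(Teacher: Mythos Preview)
The paper does not supply its own proof of this theorem: immediately before the statement it writes ``We recall now the results of \cite{LG}'' and then moves on without argument. So there is nothing in the present paper to compare your proposal against; the result is quoted wholesale from Colli--Gomez--Lorenzo--Marinoschi--Reali--Rocca.

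That said, your outline is the standard route and is essentially what is carried out in \cite{LG}: Faedo--Galerkin approximation with a Lipschitz truncation of the cubic reaction, basic energy estimates closed by Gagliardo--Nirenberg and Young, Aubin--Lions compactness to pass to the limit in the nonlinearities, removal of the truncation by the $(\phi-1)^+$ and $-\phi^-$ tests, linear comparison arguments for the sign and $L^\infty$ bounds on $\sigma$ and $p$, and a single difference computation plus Gronwall for uniqueness and continuous dependence. One small refinement: the estimate in the $X_0\times X\times X$ norms under the stronger hypothesis $(\phi_0,\sigma_0,p_0)\in V_0\times V\times V$ is not obtained from the basic test with the unknowns themselves (that already works for $H$ data) but rather from the higher-order tests with $\phi_t,\sigma_t,p_t$ or with $-\Delta\phi,-\Delta\sigma,-\Delta p$, which is where the $V$-regularity of the initial data is actually used; you gloss over this when you say the $X_0\times X\times X$ bound ``is then just the energy estimate made explicit''. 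Otherwise your sketch is sound.
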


\section{Dissipativity}
We recall the statement of the so called uniform Gronwall's lemma, that we will need to prove dissipativity of the dynamical system associated to \eqref{phi1}-\eqref{initial1}.
\begin{lemma}\label{lem3.1}
Let $y,a,b \in L^1_{\text{loc}}(0,\infty)$ three non-negative functions such that $y'\in L^1_{\text{loc}}(0,\infty)$ and 
\begin{eqnarray}
y'(t) \leq a(t)y(t) + b(t) \text{ for a.e. } t>0,
\end{eqnarray}
and let $a_1, a_2, a_3$ three non-negative constants such that 
\begin{eqnarray}
\|a\|_{\tau^1(\mathbb{R})} \leq a_1, \quad \|b\|_{\tau^1(\mathbb{R})} \leq a_2, \quad \|y\|_{\tau^1(\mathbb{R})} \leq a_3.
\end{eqnarray}
Then, we have that 
\begin{eqnarray}
y(t+1) \leq (a_2+a_3)e^{a_1} \text{  for all } t>0.
\end{eqnarray}
\end{lemma}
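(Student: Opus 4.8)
The plan is to reduce the statement to the classical (integral form of) Gronwall's inequality applied on subintervals of length one, combined with an averaging argument in the base point. Throughout I read $\|\cdot\|_{\tau^1(\mathbb{R})}$ as the sliding $L^1$ norm $\sup_{t\ge 0}\int_t^{t+1}|\cdot|\,\mathrm{d}s$, so that the hypotheses provide $\int_t^{t+1} a\,\mathrm{d}s \le a_1$, $\int_t^{t+1} b\,\mathrm{d}s \le a_2$ and $\int_t^{t+1} y\,\mathrm{d}s \le a_3$ uniformly in $t>0$. Fix $t>0$. First I would multiply the differential inequality $y' \le ay + b$ by the integrating factor $\exp(-\int_{t}^{\cdot} a)$ and integrate, or equivalently invoke the standard Gronwall lemma, on an arbitrary subinterval $[s, t+1]$ with $s \in [t, t+1]$, to obtain
\begin{equation}
y(t+1) \le y(s)\exp\!\left(\int_s^{t+1} a(\tau)\,\mathrm{d}\tau\right) + \int_s^{t+1} b(\tau)\exp\!\left(\int_\tau^{t+1} a(r)\,\mathrm{d}r\right)\mathrm{d}\tau.
\end{equation}

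Next I would bound every exponential uniformly. Since $a$ is nonnegative, for any $s,\tau \in [t,t+1]$ we have $\int_s^{t+1} a \le \int_t^{t+1} a \le a_1$ and likewise $\int_\tau^{t+1} a \le a_1$; hence the previous inequality yields the estimate $y(t+1) \le e^{a_1} y(s) + e^{a_1}\int_t^{t+1} b\,\mathrm{d}\tau \le e^{a_1} y(s) + a_2 e^{a_1}$, valid for \emph{every} $s \in [t, t+1]$.

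Finally comes the averaging step, which is the only point requiring a little care: this estimate holds pointwise in $s$, yet the hypothesis controls only the integral of $y$, not its pointwise values. To exploit this I would integrate the estimate in the variable $s$ over the interval $[t, t+1]$, whose length is one, so that the left-hand side is unchanged while the factor $y(s)$ is replaced by $\int_t^{t+1} y(s)\,\mathrm{d}s \le a_3$. This gives $y(t+1) \le e^{a_1} a_3 + a_2 e^{a_1} = (a_2+a_3)e^{a_1}$, and since $t>0$ was arbitrary the conclusion follows. The main (mild) obstacle is precisely this averaging device: one cannot simply evaluate the intermediate estimate at a single $s$, because $y$ is only assumed locally integrable with an $L^1$ bound over windows of length one; integrating in the base point $s$ is exactly what converts the pointwise factor $y(s)$ into the available integral quantity $a_3$. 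The regularity hypotheses $y, a, b \in L^1_{\mathrm{loc}}(0,\infty)$ with $y' \in L^1_{\mathrm{loc}}(0,\infty)$ are what justify the use of the integrating factor and the interchange of integrations in the first step.
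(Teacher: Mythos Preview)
Your argument is correct and is precisely the classical proof of the uniform Gronwall lemma (cf.\ Temam, \emph{Infinite-dimensional Dynamical Systems in Mechanics and Physics}, Lemma~III.1.1). Note that the paper itself does not supply a proof of this lemma at all: it merely recalls the statement as a known tool before using it in the dissipativity estimates, so there is no ``paper's own proof'' to compare against. Your interpretation of $\|\cdot\|_{\tau^1(\mathbb{R})}$ as the uniform-local norm $\sup_{t\ge 0}\int_t^{t+1}|\cdot|\,\mathrm{d}s$ is the intended one, and the three steps---Gronwall on $[s,t+1]$, uniform bound on the exponentials via $a_1$, then averaging in $s$ over $[t,t+1]$ to convert $y(s)$ into $\int_t^{t+1}y\le a_3$---constitute exactly the standard derivation.
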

Now let us recall some basic notions on absorbing sets and attractors. Given a strongly continuous semigroup $S(t)$ on a complete metric space $(X,d_X),$ we say that $\mathcal{B}_0$ is an absorbing set for $S(t)$ iff:
\begin{itemize}
\item $\mathcal{B}_0$ is bounded;
\item for any bounded set $\mathcal{B} \subset X,$ there exist a time $T_{\mathcal{B}}\geq 0$ such that 
\begin{eqnarray}
S(t)\mathcal{B} \subset \mathcal{B}_0 \quad \forall t \geq T_{\mathcal{B}}.
\end{eqnarray} 
\end{itemize}
Next, a set $\mathcal{K} \subset X$ is said tobe uniformly attracting for the semigroup $S(t)$ iff for any bounded set $\mathcal{B} \subset X$, we have 
\begin{eqnarray}
\lim_{t \rightarrow \infty} \partial(S(t)\mathcal{B},\mathcal{K}) =0,
\end{eqnarray}
where $\partial$ denotes the unilateral Hausdroff distance of the set $S(t)\mathcal{B}$ from $\mathcal{K}$, with respect to the metric of $X$, i.e. 
\begin{eqnarray}
\partial(S(t)\mathcal{B},\mathcal{K}) := \sup_{y \in S(t)\mathcal{B}} \inf_{k \in \mathcal{K}} d_X(y,k).
\end{eqnarray}
Finally, a set $\K$ is the universal attractor of the semigroup $S(t)$ iff:
\begin{itemize}
\item $\K$ is attracting and compact in $X$;
\item $\K$ is fully invariant with respect to $S(t)$, i.e. $S(t) \K=\K$ for all $t\geq 0$. 
\end{itemize}
Also from (Section I.1.3, \cite{temam}) we remark that if the universal attractor exists, it is a unique and connected set. Now we state a general abstract criterion (Theorem I.1.1, \cite{temam}) which provides a sufficient condition for the existence of the attractor.
\begin{theorem}
Let $S(t)$ be a strongly continuous semigroup on the complete metric space $(X,d_X)$. Let us assume that:
\begin{itemize}
\item $S(t)$ admits an absorbing set $\B_0$ (dissipativity);
\item for any bounded set $\B \subset X$, there exist $t_{\B}>0$ such that 
\begin{eqnarray}
\cup_{t \geq t_{\B}} S(t) \B \text{  is compact in  }X \text{ (uniform compactness)}.
\end{eqnarray}
Then, $S(t)$ admits the universal attractor $\mathcal{K}$ which is given by 
\begin{eqnarray}
\mathcal{K}=\omega - \lim(S(t) \B_0)=\cap_{\tau \geq 0} \overline{\cup_{t \geq \tau} S(t) \B_0} 
\end{eqnarray}
\end{itemize}
\end{theorem}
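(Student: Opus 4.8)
The plan is to establish that the set
\[
\K=\omega\text{-}\lim(S(t)\B_0)=\bigcap_{\tau\geq 0}\overline{\bigcup_{t\geq\tau}S(t)\B_0}
\]
simultaneously satisfies the three requirements in the definition of universal attractor: it is compact, it uniformly attracts every bounded set, and it is fully invariant. The backbone of every step is the \emph{sequential characterization} of the $\omega$-limit set: a point $y$ belongs to $\K$ if and only if there are sequences $x_n\in\B_0$ and $t_n\to\infty$ with $S(t_n)x_n\to y$ in $X$. I would establish this characterization first, since both the attracting property and the invariance reduce to extracting convergent subsequences of orbits issued from $\B_0$, for which the uniform compactness hypothesis is the only available tool.

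First I would prove nonemptiness and compactness. Applying the uniform compactness assumption to the bounded set $\B_0$ yields a time $t_0:=t_{\B_0}$ such that $K_0:=\overline{\bigcup_{t\geq t_0}S(t)\B_0}$ is compact. Since the family $\overline{\bigcup_{t\geq\tau}S(t)\B_0}$ decreases in $\tau$, the intersection defining $\K$ equals the intersection over $\tau\geq t_0$, which is a nested family of nonempty closed subsets of the compact set $K_0$. Cantor's intersection theorem (the finite intersection property in a compact space) then shows that $\K$ is nonempty, and $\K$ is compact as a closed subset of $K_0$.

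Next I would verify the attracting property by contradiction. If $\partial(S(t)\B,\K)\not\to 0$ for some bounded $\B$, there exist $\delta>0$, $t_n\to\infty$ and $x_n\in\B$ with $\inf_{k\in\K}d_X(S(t_n)x_n,k)\geq\delta$. Because $\B_0$ is absorbing there is $T_\B$ with $S(T_\B)x_n\in\B_0$; writing $S(t_n)x_n=S(t_n-T_\B)\big(S(T_\B)x_n\big)$ with $t_n-T_\B\to\infty$ places these points, for $n$ large, inside the compact set $K_0$. Extracting a convergent subsequence $S(t_{n_k})x_{n_k}\to z$ and invoking the sequential characterization gives $z\in\K$, which contradicts $\inf_{k\in\K}d_X(S(t_n)x_n,k)\geq\delta$.

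Finally I would show full invariance $S(t)\K=\K$. The inclusion $S(t)\K\subseteq\K$ is immediate from the characterization and the strong continuity of $S(t)$: if $y=\lim S(t_n)x_n$ then $S(t)y=\lim S(t+t_n)x_n\in\K$ by the semigroup property, since $t+t_n\to\infty$. The reverse inclusion $\K\subseteq S(t)\K$ is the main obstacle, as it requires constructing a preimage: given $y=\lim S(t_n)x_n\in\K$, the points $S(t_n-t)x_n$ lie in $K_0$ for $n$ large, so a subsequence converges to some $z\in\K$, and continuity together with $S(t)S(t_n-t)x_n=S(t_n)x_n$ forces $S(t)z=y$. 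This step uses compactness in an essential way, which is precisely why the uniform compactness hypothesis cannot be dispensed with. Uniqueness and connectedness of $\K$ then follow from the remark preceding the statement.
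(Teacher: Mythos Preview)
Your argument is correct and follows the standard route to this classical result. However, the paper does not actually supply a proof of this theorem: it is stated there only as a quotation of the abstract criterion Theorem~I.1.1 from Temam~\cite{temam}, with no proof given in the paper itself. Your sketch---sequential characterization of the $\omega$-limit set, compactness via the nested intersection inside $K_0$, attraction by contradiction after routing through the absorbing set, and full invariance by extracting preimages from the compact set---is precisely the argument one finds in Temam's book, so there is nothing to compare beyond noting that you have reproduced the cited reference's proof rather than anything original to the paper.
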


We first derive some boundedness property for the nutrient $\sigma$. Let us denote $\tilde{\sigma}=\max\{\frac{\S_h}{\gamma_h},\frac{\S_c}{\gamma_c}\}$ and $\sigma_0 \leq \tilde{\sigma}.$
\begin{theorem}\label{sigmabound}
Let the assumption \ref{ass 1} holds and assume that $(\phi_0,\sigma_0,p_0) \in H \times H \times H,$ with $0 \leq \sigma_0 \leq \tilde{\sigma}$ a.e. $x\in \Omega$ where $\tilde{\sigma}$ is $\max\{\frac{\S_h}{\gamma_h},\frac{\S_c}{\gamma_c}\}$. Then the system \eqref{phi}-\eqref{initial} has a unique solution $(\phi,\sigma,p)$ in the sense of Definition \ref{def1}, such that 
\begin{eqnarray}
0 \leq \sigma \leq \tilde{\sigma} \text{  a.e.  } (t,x) \in Q_T.
\end{eqnarray}
\end{theorem}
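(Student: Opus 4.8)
The statement is a maximum principle for the nutrient equation \eqref{sigma}, and I would prove it by the Stampacchia truncation (energy) method applied directly to the weak formulation \eqref{sigma1}. Existence and uniqueness of $(\phi,\sigma,p)$, together with the pointwise bound $0\le\phi\le1$ a.e.\ in $Q_T$, are already furnished by the theorem recalled from \cite{LG}; these are the only facts about $\phi$ that the argument needs, so the whole task reduces to the two one-sided bounds $\sigma\le\tilde\sigma$ and $\sigma\ge0$, for which I exploit the exact choice $\tilde\sigma=\max\{S_h/\gamma_h,\,S_c/\gamma_c\}$.

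For the upper bound I would test \eqref{sigma1} with $\psi_2=(\sigma-\tilde\sigma)^+\in L^2(0,T;V)$, which is admissible since $\sigma\in L^2(0,T;V)$ and $\tilde\sigma$ is constant. The time-derivative term yields $\tfrac12\frac{d}{dt}\|(\sigma-\tilde\sigma)^+\|^2$ via the chain rule for truncations, and the diffusion term contributes $\eta\|\nabla(\sigma-\tilde\sigma)^+\|^2\ge0$. The crucial observation is that the reaction coefficient is a convex combination,
\[
\gamma_h\sigma+(\gamma_c-\gamma_h)\sigma\phi=\sigma\big[\gamma_h(1-\phi)+\gamma_c\phi\big],
\]
while the source rewrites as $S_h+(S_c-S_h)\phi-s\phi=S_h(1-\phi)+S_c\phi-s\phi$. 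On the set $\{\sigma>\tilde\sigma\}$ the definition of $\tilde\sigma$ forces $\gamma_h\sigma\ge S_h$ and $\gamma_c\sigma\ge S_c$; multiplying the first by $1-\phi\ge0$, the second by $\phi\ge0$, and using $s\ge0$ gives
\[
\sigma\big[\gamma_h(1-\phi)+\gamma_c\phi\big]\;\ge\;S_h(1-\phi)+S_c\phi\;\ge\;S_h(1-\phi)+S_c\phi-s\phi,
\]
so that source minus reaction is nonpositive where $(\sigma-\tilde\sigma)^+>0$. Hence $\tfrac12\frac{d}{dt}\|(\sigma-\tilde\sigma)^+\|^2+\eta\|\nabla(\sigma-\tilde\sigma)^+\|^2\le0$, and since $\sigma_0\le\tilde\sigma$ gives $(\sigma_0-\tilde\sigma)^+=0$, monotonicity of $t\mapsto\|(\sigma-\tilde\sigma)^+\|^2$ yields $(\sigma-\tilde\sigma)^+\equiv0$, i.e.\ $\sigma\le\tilde\sigma$ a.e.

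For the lower bound I would test \eqref{sigma1} with $\psi_2=-\sigma^-$, where $\sigma^-=\max\{-\sigma,0\}\in L^2(0,T;V)$. The same structure applies: the reaction term becomes $\int(\sigma^-)^2[\gamma_h(1-\phi)+\gamma_c\phi]\ge0$ and the diffusion term is $\eta\|\nabla\sigma^-\|^2\ge0$, while the source pairs against $-\sigma^-\le0$ with the favourable sign precisely when $S_h(1-\phi)+S_c\phi-s\phi\ge0$; this is where the condition $s\le S_c$ enters (together with $S_h>0$ and $0\le\phi\le1$), exactly as in the nonnegativity conclusion already recalled from \cite{LG}. The cleanest route is therefore to inherit $\sigma\ge0$ directly from that theorem; alternatively one runs the truncation argument and obtains $\tfrac12\frac{d}{dt}\|\sigma^-\|^2\le0$ with $\sigma^-(0)=0$, hence $\sigma^-\equiv0$.

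The main technical obstacle is not the algebra but the justification of the identity $\langle\sigma_t,(\sigma-\tilde\sigma)^+\rangle=\tfrac12\frac{d}{dt}\|(\sigma-\tilde\sigma)^+\|^2$ for a solution whose time derivative lives only in $L^2(0,T;V')$: the truncation is not smooth, so this rests on the standard integration-by-parts lemma for functions in $W^{1,2}(0,T;V')\cap L^2(0,T;V)$ tested against their own truncations. If one prefers to avoid any delicate chain-rule issue, the entire computation can instead be carried out on the Galerkin approximants used to construct the solution in \cite{LG} and then passed to the limit, at the cost of transporting the truncation through the approximation. I expect the convex-combination rewriting of the coefficients, tied to the sharp constant $\tilde\sigma=\max\{S_h/\gamma_h,\,S_c/\gamma_c\}$, to be the conceptual heart of the proof.
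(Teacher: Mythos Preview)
Your proposal is correct and follows essentially the same approach as the paper: test \eqref{sigma1} with $(\sigma-\tilde\sigma)^+$, rewrite the reaction and source terms as convex combinations in $\phi$ and $1-\phi$, and use the definition of $\tilde\sigma$ together with $s\ge0$ to obtain a nonpositive right-hand side, while the lower bound $\sigma\ge0$ is inherited from the result of \cite{LG}. Your discussion of the chain-rule identity $\langle\sigma_t,(\sigma-\tilde\sigma)^+\rangle=\tfrac12\frac{d}{dt}\|(\sigma-\tilde\sigma)^+\|^2$ is in fact more careful than the paper, which uses it tacitly.
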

\begin{proof}
For this we test \eqref{sigma1} by $(\sigma - \tilde{\sigma})^+$ and deduce
\begin{align}
\frac{1}{2}\|(\sigma - \tilde{\sigma})^+(t)\|^2 &+ \int_0^t \|\nabla(\sigma - \tilde{\sigma})^+(\tau)\|^2 \d \tau \leq \frac{1}{2}\|(\sigma_0 - \tilde{\sigma})^+\|^2 \no \\
&- \int_0^t \int_{\Omega} \Big \{\gamma_c \left(\sigma -\frac{\S_c}{\gamma_c}\right) \phi + \gamma_h \left( (\sigma -\frac{\S_h}{\gamma_h}\right)(1-\phi) \Big \} (\sigma - \tilde{\sigma})^+(\tau) \d x \d \tau \no \\
& - \int_0^t \int_{\Omega} s \phi (\sigma - \tilde{\sigma})^+(\tau) \d x \d \tau. 
\end{align}
Using $(\sigma -\frac{\S_h}{\gamma_h}) \geq (\sigma - \tilde{\sigma})$, $(\sigma -\frac{\S_c}{\gamma_c}) \geq (\sigma - \tilde{\sigma}),$ and $0\leq \phi \leq 1$, we further get
\begin{align}
\frac{1}{2}\|(\sigma - \tilde{\sigma})^+(t)\|^2 &+ \int_0^t \|\nabla(\sigma - \tilde{\sigma})^+(\tau)\|^2 \d \tau \leq \frac{1}{2}\|(\sigma_0 - \tilde{\sigma})^+\|^2 \no \\
&- \int_0^t \int_{\Omega} \{ \gamma_c \left(\sigma -\tilde{\sigma}\right) \phi + \gamma_h \left(\sigma -\tilde{\sigma}\right) (1-\phi) \} (\sigma - \tilde{\sigma})^+(\tau) \d x \d \tau \no \\
& - \int_0^t \int_{\Omega} s \phi (\sigma - \tilde{\sigma})^+(\tau) \d x \d \tau.
\end{align}
Using the fact that $(\sigma - \tilde{\sigma}) \leq (\sigma - \tilde{\sigma})^+$ we deduce
\begin{align}
\frac{1}{2}\|(\sigma - \tilde{\sigma})^+(t)\|^2 &+ \int_0^t \|\nabla(\sigma - \tilde{\sigma})^+(\tau)\|^2 \d \tau \leq \frac{1}{2}\|(\sigma_0 - \tilde{\sigma})^+\|^2 \no \\
&- \int_0^t \int_{\Omega} \left( \phi \gamma_c + (1-\phi) \gamma_h \right) (\sigma - \tilde{\sigma})^2(\tau) \d x \d \tau  - \int_0^t \int_{\Omega} s \phi (\sigma - \tilde{\sigma})^+(\tau) \d x \d \tau.
\end{align}
Since $(\sigma_0 - \tilde{\sigma})^+=0$ and the second and third term in the right hand side are nonpositive so we get $(\sigma - \tilde{\sigma})^+ =0$. So we showed that $0 \leq \sigma \leq \tilde{\sigma}=\max\{\frac{\S_h}{\gamma_h},\frac{\S_c}{\gamma_c}\} $ a.e. in $\Omega$, for all $t \in [0,T].$
\end{proof} 

\subsection{Attractor}
Let us introduce the phase space as following 
 $$\tilde{X}= \{(\phi,\sigma,p) \in L^2(\Omega) \times L^2(\Omega) \times L^2(\Omega): 0\leq \phi \leq 1, 0\leq \sigma \leq \tilde{\sigma}, p\geq 0 \},$$ where $\tilde{\sigma}>0$ is defined in Theorem \ref{sigmabound}. Now the space $\tilde{X}$ is complete metric space with the metric  associated with the distance
 $$d(\phi_1,\phi_2,\sigma_1,\sigma_2,p_1,p_2)= \|\phi_1-\phi_2\|_H + \|\sigma_1-\sigma_2\|_H + \|p_1-p_2\|_H.$$
Now we can introduce the solution operator 
$$S(t) : \tilde{X} \rightarrow \tilde{X}, \quad t \geq 0,$$
$$(\phi_0,\sigma_0,p_0) \rightarrow S(t)(\phi_0,\sigma_0,p_0) = (\phi(t),\sigma(t),p(t)).$$
It can be easily seen that $(S(t),d_{\tilde{X}})$ defines a strongly continuous semigroup.  

Now we will show that the dynamical system $(S(t),d_{\tilde{X}})$ associated to our problem poses a bounded absorbing set. We will use $C$ as a generic constant which is independent of time and initial conditions. This constant may vary even within the same line.
\begin{theorem}\label{thm3.4}
Let the assumption \ref{ass 1} be satisfied. Then $S(t)$ has a bounded absorbing set as following 
$$\mathcal{B}_0 := \{(\phi,\sigma,p) \in \tilde{X} : \|(\phi,\sigma,p)\| \leq C_0 \},$$
where $C_0$ is a positive constant that depends on the $H$ norm of the initial data and the constants $\gamma_h$, $\gamma_p$. That means for each bounded set $\mathcal{B}$ in $\tilde{X}$ there exist a time $t_0= t_0(\mathcal{B})>0$ such that any weak solution satisfies 
\begin{align}
\|(\phi(t),\sigma(t),p(t))\|_{\tilde{X}} \leq C_0 \quad \forall t \geq t_0.
\end{align}
\end{theorem}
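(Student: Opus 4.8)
The plan is to exploit the special structure of the phase space $\tilde X$. By construction every trajectory stays in the box $0\le\phi\le1$, $0\le\sigma\le\tilde\sigma$, $p\ge0$, and these pointwise bounds are preserved in time by the estimates already established in Theorem~\ref{sigmabound} and in the well-posedness theorem. Consequently the $H$-norms of the first two components come for free: for every $t\ge0$ and every initial datum in $\tilde X$ one has $\|\phi(t)\|\le|\Omega|^{1/2}$ and $\|\sigma(t)\|\le\tilde\sigma\,|\Omega|^{1/2}$, with no appeal to the dynamics beyond invariance of the box. Thus the whole content of the theorem reduces to producing a uniform-in-time, initial-data-independent asymptotic bound on $\|p(t)\|$.

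To control $p$ I would run the natural energy estimate on \eqref{p}. Testing the equation by $p$ (which is legitimate at the weak level since $p\in X$, so that $p_t\in L^2(0,T;V')$ and $p\in L^2(0,T;V)$, whence the chain rule $\tfrac{d}{dt}\|p\|^2=2\langle p_t,p\rangle$ holds) gives
\begin{align}
\frac{1}{2}\frac{d}{dt}\|p\|^2 + D\|\nabla p\|^2 + \gamma_p\|p\|^2 = \int_\Omega\big(\alpha_h+(\alpha_c-\alpha_h)\phi\big)\,p\,\d x. \no
\end{align}
Using $0\le\phi\le1$, the forcing is bounded in modulus by $\alpha_{\max}:=\max\{\alpha_h,\alpha_c\}$, so Cauchy--Schwarz and Young's inequality yield
\begin{align}
\int_\Omega\big(\alpha_h+(\alpha_c-\alpha_h)\phi\big)\,p\,\d x \le \alpha_{\max}|\Omega|^{1/2}\|p\| \le \frac{\gamma_p}{2}\|p\|^2 + \frac{\alpha_{\max}^2|\Omega|}{2\gamma_p}. \no
\end{align}
Dropping the nonnegative gradient term then produces the differential inequality
\begin{align}
\frac{d}{dt}\|p\|^2 + \gamma_p\|p\|^2 \le \frac{\alpha_{\max}^2|\Omega|}{\gamma_p} =: K, \no
\end{align}
whose crucial feature is the genuinely dissipative zeroth-order coefficient $\gamma_p>0$.

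From here a direct application of Gronwall's lemma gives $\|p(t)\|^2\le\|p_0\|^2e^{-\gamma_p t}+K\gamma_p^{-1}\big(1-e^{-\gamma_p t}\big)$, so that $\limsup_{t\to\infty}\|p(t)\|^2\le K/\gamma_p^2$. Hence, given any bounded set $\mathcal{B}\subset\tilde X$ with $\|p_0\|\le R$ for all its elements, there is a time $t_0=t_0(R)$ after which $\|p(t)\|^2\le K/\gamma_p^2+1$ for every trajectory starting in $\mathcal{B}$. Combining this with the free bounds on $\phi$ and $\sigma$ yields, for $t\ge t_0$, the uniform estimate $\|(\phi(t),\sigma(t),p(t))\|_{\tilde X}\le C_0$, with $C_0$ depending only on $|\Omega|,\tilde\sigma,\alpha_h,\alpha_c$ and $\gamma_p$ (hence, through $\tilde\sigma$, also on $\gamma_h,\gamma_c,S_h,S_c$) but not on the initial data; this is precisely the absorbing ball $\mathcal{B}_0$. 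I do not expect a serious obstacle: the only delicate points are the rigorous justification of the test-function computation for weak solutions and the observation that, thanks to the strictly dissipative term $\gamma_p p$, the asymptotic bound is independent of the initial datum, so that standard Gronwall already suffices and the uniform Gronwall Lemma~\ref{lem3.1} is not even needed for this $L^2$-level statement.
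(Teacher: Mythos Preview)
Your argument is correct and in fact cleaner than the route taken in the paper. The paper tests \eqref{phi}, \eqref{sigma}, \eqref{p} with $\phi$, $\sigma$, $p$ respectively, adds, and runs a single Gronwall estimate on the sum $\|\phi\|^2+\|\sigma\|^2+\|p\|^2$, using the dissipative terms $\gamma_h\|\sigma\|^2$, $\gamma_p\|p\|^2$ and (implicitly, via Poincar\'e for the Dirichlet variable) $\lambda\|\nabla\phi\|^2$. You instead observe that the very definition of the phase space $\tilde X$ already pins $\phi$ and $\sigma$ into $L^\infty$-boxes, so $\|\phi(t)\|$ and $\|\sigma(t)\|$ are bounded uniformly for all $t\ge0$ with no dynamics needed; only $p$ requires an energy estimate, and for it the scalar inequality $\tfrac{d}{dt}\|p\|^2+\gamma_p\|p\|^2\le K$ and ordinary Gronwall suffice. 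What your approach buys is a shorter proof with an absorbing radius $C_0$ that is manifestly independent of the initial data (only $t_0$ depends on $\mathcal B$), and it avoids the slightly delicate point in the paper's proof where a dissipation coefficient for $\|\phi\|^2$ has to be manufactured from $\lambda\|\nabla\phi\|^2$. The paper's approach, on the other hand, yields as a byproduct the time-integrated control $\int_0^t(\|\nabla\phi\|^2+\|\nabla\sigma\|^2+\|\nabla p\|^2)\,d\tau$, which is then fed into the next theorem via the uniform Gronwall Lemma~\ref{lem3.1}; your argument does not produce that extra information, but it is not needed for the statement at hand.
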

\begin{proof}
 Taking inner product of \eqref{phi1} with $\phi$, \eqref{sigma1} with $\sigma$ and \eqref{p1} with $p$ and sum up to obtain
\begin{align}
  \frac{1}{2}   \frac{\d}{\d t}\left(\|\phi\|^2+\|\sigma\|^2+\|p\|^2\right) &+ \gamma_h\|\sigma\|^2+ \gamma_p\|p\|^2+ \lambda \|\nabla \phi\|^2 + \eta \|\nabla \sigma\|^2 +  D  \|\nabla p\|^2 \no \\ &\leq  -\int_{\Omega} 2\phi^2 (1-\phi)f(\phi,\sigma,u) \d x 
    - \int_{\Omega} \gamma_{ch} \sigma^2 \phi \d x \no \\ & + \int_{\Omega} \left( S_h  (1-\phi) + (S_c-s) \phi \right) \sigma \d x \no \\
    &+ \int_{\Omega} \{\alpha_h(1-\phi)+\alpha_c \phi\}p \d x. 
\end{align}
Now using the fact that $0<\phi<1$ and $0<\sigma<\tilde{\sigma}$ we get
\begin{align}
    \frac{1}{2} \frac{\d}{\d t}\left(\|\phi\|^2+\|\sigma\|^2+\|p\|^2\right) &+ \gamma_h\|\sigma\|^2+ \gamma_p\|p\|^2+ \lambda \|\nabla \phi\|^2 + \eta \|\nabla \sigma\|^2 +  D  \|\nabla p\|^2  \no  \\
     &\leq 2\|f\|_{\mathrm{L}^\infty}+ |\gamma_{ch}| \tilde{\sigma}^2 + C \tilde{\sigma} +\frac{ \gamma_p}{2} \|p\|^2 + C  \no
     \end{align}
     and therefore
     \begin{align}\label{e6}
   \frac{1}{2}   \frac{\d}{\d t}\left(\|\phi\|^2+\|\sigma\|^2+\|p\|^2\right) &+ \gamma_h\|\sigma\|^2+ \frac{ \gamma_p}{2}\|p\|^2+ \lambda \| \nabla \phi\|^2 + \eta \|\nabla \sigma\|^2 +  D  \|\nabla p\|^2  \leq C.
\end{align}
Now integrating over $0$ to $t$  and using Gronwall's lemma, we get for every $t>0$,
\begin{align*}
 \|\phi(t)\|^2 +\|\sigma(t)\|^2+\|p(t)\|^2 &+ \lambda \int_0^t \| \nabla \phi\|^2 + \eta \int_0^t \|\nabla \sigma\|^2 +  D \int_0^t \|\nabla p\|^2  \no \\
 &\leq (\|\phi_0\|^2+\|\sigma_0\|^2+\|p_0\|^2) e^{-\min\{2\gamma_h,\gamma_p \}t} + \frac{\bar{C}}{\min\{2\gamma_h,\gamma_p \}} .
\end{align*}
Now if we take $ t_0=\frac{1}{\min\{2\gamma_h,\gamma_p \}} \log{\frac{(\|\phi_0\|^2+\|\sigma_0\|^2+\|p_0\|^2)\bar{C}}{\min\{2\gamma_h,\gamma_p \}}\bar{C}}$, then we will  get from \eqref{e6}
\begin{align}\label{e7}
\|\phi(t)\|^2 +\|\sigma(t)\|^2+\|p(t)\|^2 \leq C_0, \quad \forall t \geq t_0,
\end{align}
where $$C_0=  (\|\phi_0\|^2+\|\sigma_0\|^2+\|p_0\|^2) e^{-\min\{2\gamma_h,\gamma_p \}t_0} + \frac{\bar{C}}{\min\{2\gamma_h,\gamma_p \}}.$$
\end{proof}

In the next theorem, we prove that the dynamical system $S(t)$ has compact absorbing sets which will guarantee the existence of the global attractor.
\begin{theorem}
Let Assumptions \ref{ass 1} hold. Then the dynamical system $S(t)$ admits the global attractor $\mathcal{A}$. More precisely, $\mathcal{A}$ is a compact subset of $\tilde{X}$ which is bounded in $H^1_0(\Omega) \times H^1 (\Omega)\times H^1(\Omega)$ and uniformly attracts the trajectories emanating from any bounded set $\B \subset \tilde{X}$.  We prove in the sense that there exist a positive constant $C_1$ such that for any bounded set $\mathcal{B}$ there exist a time $t_1=t_1(\mathcal{B})>0$ such that
$$\|(\phi,\sigma,p)\|_{H^1_0(\Omega) \times H^1 (\Omega)\times H^1(\Omega)} \leq C_1, \quad \forall t \geq t_1.$$ 
\end{theorem}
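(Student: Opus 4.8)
The plan is to apply the abstract criterion recalled above (Theorem I.1.1 of \cite{temam}): since Theorem \ref{thm3.4} already provides a bounded absorbing set $\B_0$ in $\tilde X$ (dissipativity), it remains only to verify the \emph{uniform compactness} property. I would obtain this by establishing a uniform-in-time bound for the trajectories in the space $V_0\times V\times V = H^1_0(\Omega)\times H^1(\Omega)\times H^1(\Omega)$, which is compactly embedded in $\tilde X\subset H\times H\times H$ by the Rellich--Kondrachov theorem (recall $\Omega$ is bounded with smooth boundary). Once such a bound holds for all $t\ge t_1$ on every bounded set $\B\subset\tilde X$, the set $\overline{\cup_{t\ge t_1}S(t)\B}$ is compact in $\tilde X$, and the abstract theorem yields the global attractor $\A=\omega-\lim(S(t)\B_0)$, automatically bounded in $H^1_0\times H^1\times H^1$.

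The core of the argument is therefore a higher-order energy estimate. First I would test \eqref{phi} with $-\Delta\phi$, \eqref{sigma} with $-\Delta\sigma$, and \eqref{p} with $-\Delta p$; the boundary contributions vanish thanks to the homogeneous Dirichlet condition on $\phi$ and the Neumann conditions on $\sigma,p$, producing the dissipative terms $\lambda\|\Delta\phi\|^2$, $\eta\|\Delta\sigma\|^2$, $D\|\Delta p\|^2$ together with $\tfrac12\tfrac{\d}{\d t}\|\nabla\phi\|^2$, and so on. The crucial structural fact is that, by the maximum-principle bounds $0\le\phi\le1$ (existence theorem of Section~2) and $0\le\sigma\le\tilde\sigma$ (Theorem \ref{sigmabound}), together with the boundedness of the tilting function $m(\sigma)$ and of $u,s\in L^\infty(Q_T)$, the whole reaction term $2\phi(1-\phi)f(\phi,\sigma,u)$ and the sources in \eqref{sigma} and \eqref{p} are bounded in $L^\infty(Q_T)$, hence in $L^2(\Omega)$ uniformly in $t$. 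Each right-hand side is then controlled by Cauchy--Schwarz and Young's inequality, for instance $\int_\Omega 2\phi(1-\phi)f\,\Delta\phi\,\d x\le\tfrac\lambda2\|\Delta\phi\|^2+C$, so that after summation I arrive at a differential inequality of the form
\begin{align}\label{plan-ineq}
\frac{\d}{\d t}\Big(\|\nabla\phi\|^2+\|\nabla\sigma\|^2+\|\nabla p\|^2\Big)
+ c\Big(\|\Delta\phi\|^2+\|\Delta\sigma\|^2+\|\Delta p\|^2\Big)\le C,
\end{align}
with $c,C>0$ independent of $t$ and of the initial datum in $\B_0$.

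Setting $y(t)=\|\nabla\phi(t)\|^2+\|\nabla\sigma(t)\|^2+\|\nabla p(t)\|^2$, inequality \eqref{plan-ineq} gives $y'(t)\le C$, which is the hypothesis of the uniform Gronwall Lemma \ref{lem3.1} with $a\equiv0$ and $b\equiv C$. The remaining ingredient $\int_t^{t+1}y(\tau)\,\d\tau\le a_3$ is supplied by integrating the basic energy identity \eqref{e6} over $[t,t+1]$ and invoking the $L^2$ absorbing bound \eqref{e7}: the dissipation integral $\int_t^{t+1}(\lambda\|\nabla\phi\|^2+\eta\|\nabla\sigma\|^2+D\|\nabla p\|^2)\,\d\tau$ is bounded by a constant once $t\ge t_0$, because the $H$-norms at the two endpoints are already $\le C_0$. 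Lemma \ref{lem3.1} then yields $y(t+1)\le(a_2+a_3)e^{a_1}\le C_1$ for all $t\ge t_0$, which is exactly the claimed uniform bound in $H^1_0\times H^1\times H^1$, with $t_1=t_0+1$.

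The main obstacle is not the algebra but the rigorous justification of the $H^2$-level testing: the solutions furnished by the existence theorem live only in $L^2(0,T;V_0)\times L^2(0,T;V)^2$ for $L^2$ data, so $-\Delta\phi$, $-\Delta\sigma$, $-\Delta p$ are not a priori admissible test functions. I would resolve this by carrying out \eqref{plan-ineq} on a Faedo--Galerkin scheme built on the eigenfunctions $\{\mathbf e_j\}$ of $A$ and passing to the limit, or, alternatively, by exploiting the instantaneous smoothing of the semigroup: the first energy estimate shows $\phi\in L^2_{\mathrm{loc}}(0,\infty;V_0)$ and $\sigma,p\in L^2_{\mathrm{loc}}(0,\infty;V)$, so at some $t_*\ge t_0$ the state lies in $V_0\times V\times V$, and the parabolic regularity contained in the estimate of Section~2 (for data in $V_0\times V\times V$) then places the solution in $L^2_{\mathrm{loc}}(t_*,\infty;W_0\times W\times W)$, making the computations legitimate for a.e.\ $t$. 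A secondary point to verify is that the constant $c$ in \eqref{plan-ineq} can indeed be kept strictly positive after all Laplacian contributions are absorbed, which is precisely where the $L^\infty$ bounds on $\phi$ and $\sigma$ enter.
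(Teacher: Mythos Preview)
Your proposal is correct and follows essentially the same route as the paper: test the three equations with $-\Delta\phi$, $-\Delta\sigma$, $-\Delta p$, exploit the $L^\infty$ bounds on $\phi$ and $\sigma$ to control the reaction terms, arrive at the differential inequality \eqref{plan-ineq}, and close via the uniform Gronwall lemma with the dissipation integral from \eqref{e6}--\eqref{e7}, obtaining $t_1=t_0+1$. Your discussion of how to justify the $H^2$-level testing (Galerkin approximation or instantaneous smoothing) is in fact more careful than the paper, which performs the computation formally.
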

\begin{proof}
Now let us take inner product of \eqref{phi1} with $-\Delta \phi$, \eqref{sigma1} with $-\Delta \sigma$ and \eqref{p1} with $-\Delta p$ and sum up to obtain
\begin{align}
  \frac{1}{2}   \frac{\d}{\d t}\left( \|\nabla \phi\|^2 +  \|\nabla \sigma\|^2 +  \|\nabla p\|^2\right) & + \lambda \|\Delta \phi\|^2+\eta \|\Delta \sigma\|^2+D \|\Delta p\|^2 - \gamma_p(p,\Delta p)  \no \\ &\leq \int_{\Omega} 2\phi (1-\phi)f(\phi,\sigma,u) \Delta \phi \d x  \no \\ 
  & +  \int_{\Omega} \{\gamma_c \left(\sigma -\frac{\S_c}{\gamma_c}\right) \phi + \gamma_h \left( (\sigma -\frac{\S_h}{\gamma_h}\right)(1-\phi) \} \Delta \sigma \d x \no \\
& +  \int_{\Omega} s \phi \Delta \sigma  \d x - \int_{\Omega} \{\alpha_h(1-\phi)+\alpha_c \phi\}\Delta p \d x .
\end{align}
Using Gagliardo-Nirenberg, Young’s inequalities and the boundedness of $\phi$ and $\sigma$ we  deduce
\begin{align}\label{e8}
\frac{1}{2}   \frac{\d}{\d t}\left( \|\nabla \phi\|^2 +  \|\nabla \sigma\|^2 +  \|\nabla p\|^2\right) & + \frac{\lambda}{2} \|\Delta \phi\|^2+\frac{\eta}{2} \|\Delta \sigma\|^2+\frac{D}{2} \|\Delta p\|^2 + \gamma_p \|\nabla p\|^2  \no \\ &\leq \frac{2}{\lambda}\int_{\Omega}\|f\|_{L^\infty}^2  \d x  \no \\ 
  & + \frac{1}{\eta} \int_{\Omega} \{\gamma_c \left(\sigma -\frac{\S_c}{\gamma_c}\right) \phi + \gamma_h \left( (\sigma -\frac{\S_h}{\gamma_h}\right)(1-\phi) \}^2 \d x \no \\
& +  \int_\Omega \phi s \Delta \sigma  + \frac{1}{2D} \int_{\Omega} \{\alpha_h(1-\phi)+\alpha_c \phi\}^2 \d x \no \\
& \leq  C + \frac{1}{\eta}\|s\|_{L^\infty}^2 \leq C
\end{align}
Now using Theorem \ref{thm3.4} and Gronwall's Lemma we obtain 
\begin{align}\label{e9}
 \|\nabla \phi(t)\|^2 +  \|\nabla \sigma(t)\|^2 +  \|\nabla p(t)\|^2 \leq Ce^{\gamma_p}, \text{ for all }t \geq t_1=t_0+1.
\end{align}
Adding \eqref{e7} and \eqref{e9} we get for all $t \geq t_1$
\begin{align}\label{e10a}
\|\phi(t)\|_{V_0}^2+\|\sigma(t)\|_V^2+\|p(t)\|_V^2 \leq C_1.  
\end{align}
Therefore from \eqref{e7} and \eqref{e10a} we get a global attractor $\mathcal{A}$ as well as its boundedness in $V_0 \times V \times V$.
\end{proof}

Now we prove a lemma before proving the main result.
\begin{lemma}\label{lem3.6}
Let $Z$ be a Banach space. Let us take $f \in L^2(0,\infty;Z)$ such that for some $p\in (1,\infty)$ we have
\begin{align}
f \in  W^{1,p}(0,t;Z) \ \ \forall t \in [0,\infty)  \text{  and  } \sup_{t \in [0,\infty)} \|f_t\|_{L^p(t,t+1;Z)} < \infty.
\end{align}
Then $f \in L^\infty(0,\infty;Z)$. Moreover, if $p>1$, then $f(t) \rightarrow 0$ strongly in $Z$ as $t \rightarrow \infty$. 
\end{lemma}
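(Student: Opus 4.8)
The plan is to reduce everything to the Bochner fundamental theorem of calculus together with a single \emph{uniform equicontinuity} estimate, from which both assertions follow. Since $f \in W^{1,p}(0,t;Z)$ for every $t$, the function $f$ is (after modification on a null set) absolutely continuous on each bounded interval and satisfies $f(s) = f(\tau) + \int_\tau^s f_t(r)\,\d r$ for all $\tau \le s$. Throughout I would write $M := \sup_{t\ge 0}\|f_t\|_{L^p(t,t+1;Z)}$ and $K := \|f\|_{L^2(0,\infty;Z)}$, both finite by hypothesis, and let $p'=p/(p-1)$ denote the conjugate exponent of $p$.

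First I would establish equicontinuity: for all $\tau \le s$ with $s-\tau \le 1$,
\[
\|f(s) - f(\tau)\|_Z \le \int_\tau^s \|f_t(r)\|_Z\,\d r \le 2^{1/p} M\,(s-\tau)^{1-1/p}.
\]
Indeed, the interval $(\tau,s)$ meets at most two consecutive unit intervals, so $\int_\tau^s\|f_t(r)\|_Z^p\,\d r \le 2M^p$, and Hölder's inequality with exponents $p$ and $p'$ yields the displayed bound since $1/p'=1-1/p$. This is the one place where $p>1$ is indispensable: it makes the exponent $1-1/p$ strictly positive, so $f$ is uniformly Hölder continuous in time with a modulus independent of the base point.

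For the $L^\infty$-bound I would fix $t\ge 0$ and $s\in[t,t+1]$, start from $\|f(s)\|_Z \le \|f(\tau)\|_Z + \int_t^{t+1}\|f_t(r)\|_Z\,\d r$, and average over $\tau\in[t,t+1]$. By the Cauchy--Schwarz inequality the averaged first term is at most $\|f\|_{L^2(t,t+1;Z)}\le K$, while the derivative term is at most $M$ by Hölder on the unit interval. Hence $\|f(s)\|_Z \le K+M$ for every $s\ge 0$, i.e.\ $f\in L^\infty(0,\infty;Z)$; note that this part uses only $p\ge 1$.

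The delicate point is the convergence $f(t)\to 0$, and the main obstacle is precisely that the estimate just used leaves the uncontrolled additive constant $M$ from the time derivative, so it cannot by itself force $\|f(t)\|_Z$ to be small. I would instead argue by contradiction: if $f(t)\not\to 0$, choose $\e_0>0$ and $s_n\to\infty$ with $\|f(s_n)\|_Z\ge \e_0$. Taking $\ell>0$ so small that $2^{1/p}M\,\ell^{\,1-1/p}\le \e_0/2$, the equicontinuity estimate gives $\|f(s)\|_Z\ge \e_0/2$ for all $s$ with $|s-s_n|\le \ell$. Passing to a subsequence so that the intervals $[s_n-\ell,\,s_n+\ell]$ are pairwise disjoint (possible since $s_n\to\infty$), each contributes at least $2\ell(\e_0/2)^2$ to $\int_0^\infty\|f(s)\|_Z^2\,\d s$, forcing this integral to diverge and contradicting $f\in L^2(0,\infty;Z)$. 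Therefore $\|f(t)\|_Z\to 0$ as $t\to\infty$, which completes the proof.
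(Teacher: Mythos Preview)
The paper states Lemma~\ref{lem3.6} without proof (it is used as a known auxiliary result, immediately followed by the next theorem), so there is no ``paper's own proof'' to compare against. Your argument is correct and is the standard one for this type of statement: the fundamental theorem of calculus in Bochner form yields uniform H\"older continuity in time via H\"older's inequality on $\|f_t\|$, the $L^\infty$ bound follows by averaging the value over a unit interval, and the decay $f(t)\to 0$ is obtained by the usual ``fat intervals'' contradiction with $f\in L^2(0,\infty;Z)$.

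Two very minor remarks that do not affect correctness. First, the factor $2^{1/p}$ in the equicontinuity estimate is unnecessary here: since the hypothesis bounds $\|f_t\|_{L^p(t,t+1;Z)}$ for \emph{every} $t\ge 0$ (not only integer $t$), the interval $(\tau,s)$ with $s-\tau\le 1$ is already contained in the single unit interval $(\tau,\tau+1)$, so $\int_\tau^s\|f_t\|_Z^p\,\d r\le M^p$ directly. Second, in the contradiction step one tacitly uses $s_n>\ell$ so that $[s_n-\ell,s_n+\ell]\subset(0,\infty)$; since $s_n\to\infty$ this holds after discarding finitely many terms.
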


\begin{theorem}
Let us take $\phi_0, \sigma_0, p_0 \in H$, then we have
\begin{align}
\phi \in  W^{1,2}(0,t;H) \ \ \forall t \in [0,\infty)  \text{  and  } \sup_{t \in [0,\infty)} \|\phi_t\|_{L^2(t,t+1;H)} < \infty.
\end{align}
Therefore we get from the previous Lemma \ref{lem3.6} that $\phi(t) \rightarrow 0$ as $t \rightarrow \infty$. 
\end{theorem}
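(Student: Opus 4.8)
The plan is to verify the hypotheses of Lemma \ref{lem3.6} with $Z=H$, $f=\phi$ and $p=2$, so that its conclusion gives $\phi(t)\to 0$ in $H$ directly. Three ingredients are needed: (i) $\phi\in L^2(0,\infty;H)$; (ii) the local regularity $\phi\in W^{1,2}(0,t;H)$ for every $t$; and (iii) the uniform control $\sup_{t\ge 0}\|\phi_t\|_{L^2(t,t+1;H)}<\infty$. The statement records (ii) and (iii) explicitly, while (i) must also be produced, and I would extract it from the energy identity for the source-free phase equation \eqref{phi}.

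For (ii) and (iii), I would test \eqref{phi1} with $\phi_t$, obtaining
\[
\|\phi_t\|^2 + \frac{\lambda}{2}\frac{\d}{\d t}\|\nabla\phi\|^2 = -\int_\Omega 2\phi(1-\phi)f(\phi,\sigma,u)\,\phi_t\,\d x .
\]
Since $0\le\phi\le 1$ and $\sigma$ is bounded by Theorem \ref{sigmabound}, the coefficient $2\phi(1-\phi)f$ is bounded in $L^\infty$ by a constant, so Young's inequality absorbs $\|\phi_t\|$ and leaves $\tfrac12\|\phi_t\|^2+\tfrac{\lambda}{2}\frac{\d}{\d t}\|\nabla\phi\|^2\le C$. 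Integrating over $(t,t+1)$ and discarding the nonnegative endpoint term yields $\tfrac12\int_t^{t+1}\|\phi_\tau\|^2\,\d\tau\le C+\tfrac{\lambda}{2}\|\nabla\phi(t)\|^2$, and the uniform gradient bound \eqref{e9} coming from the attractor estimate renders the right-hand side independent of $t$. This gives (iii) and, a fortiori, $\phi_t\in L^2_{\mathrm{loc}}$, which is (ii).

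For (i), I would instead test \eqref{phi1} with $\phi$, which gives
\[
\frac12\frac{\d}{\d t}\|\phi\|^2 + \lambda\|\nabla\phi\|^2 = -\int_\Omega 2\phi^2(1-\phi)f(\phi,\sigma,u)\,\d x .
\]
The decisive structural feature is that \eqref{phi} carries no external source, so the right-hand side is controlled purely by $\phi$ itself: using $0\le\phi\le 1$ and the boundedness of $f$ one gets $\left|\int_\Omega 2\phi^2(1-\phi)f\,\d x\right|\le 2\|f\|_{L^\infty}\|\phi\|^2$, while Poincar\'e's inequality \eqref{lambda} gives $\lambda\|\nabla\phi\|^2\ge\lambda\lambda_1\|\phi\|^2$. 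Integrating over $(0,t)$ and moving the reaction term to the left, the condition $\lambda\lambda_1>2\|f\|_{L^\infty}$ produces $(\lambda-2\|f\|_{L^\infty}/\lambda_1)\int_0^t\|\nabla\phi\|^2\le\tfrac12\|\phi_0\|^2$, whence $\int_0^\infty\|\nabla\phi\|^2<\infty$ and, again by Poincar\'e, $\phi\in L^2(0,\infty;H)$. With (i)–(iii) in hand, Lemma \ref{lem3.6} applies and yields $\phi(t)\to 0$ in $H$.

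I expect the main obstacle to be twofold. First, step (i) genuinely requires diffusion to dominate the net reaction strength (the condition $\lambda\lambda_1>2\|f\|_{L^\infty}$, which is exactly where the ``conditions on the coefficients'' enter): exploiting only the coupled dissipation estimate \eqref{e6} gives boundedness but not integrability, because its right-hand side carries a nonzero constant, so the source-free structure of \eqref{phi} must be used in isolation. Second, the regularity (ii) near $t=0$ is delicate when only $\phi_0\in H$, since the identity obtained by testing with $\phi_t$ presupposes $\nabla\phi(t)$ finite; this holds for $t>0$ by parabolic smoothing but degenerates as $t\to 0$, so the argument should be carried out on $(\delta,t)$ and closed uniformly by invoking the instantaneous regularization into $V_0$ underlying \eqref{e9}.
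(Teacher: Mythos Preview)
Your route to the displayed conditions (ii) and (iii) is essentially the paper's: test the phase equation with $\phi_t$, use $0\le\phi\le1$ and the $L^\infty$ bound on $f$, absorb via Young, and integrate. The paper actually tests all three equations \eqref{phi1}--\eqref{p1} against $(\phi_t,\sigma_t,p_t)$ simultaneously and arrives at an inequality of the form
\[
\|\phi_t\|^2+\|\sigma_t\|^2+\|p_t\|^2+\tfrac{\d}{\d t}\bigl(\lambda\|\nabla\phi\|^2+\cdots\bigr)\le C,
\]
but the $\sigma$ and $p$ contributions are irrelevant for bounding $\phi_t$, so your decoupled treatment is cleaner and equivalent in content.

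The genuine difference is your step (i). The paper's proof stops after obtaining $\phi_t\in L^2(t,t+1;H)$ uniformly and $\phi\in W^{1,2}(0,t;H)$, then simply invokes Lemma~\ref{lem3.6}; it never verifies the standing hypothesis $\phi\in L^2(0,\infty;H)$ of that lemma. You close this gap by testing \eqref{phi1} with $\phi$, exploiting that the phase equation carries no external source, and using Poincar\'e under the smallness condition $\lambda\lambda_1>2\|f\|_{L^\infty}$. This condition is not part of the theorem as stated, but it is exactly the mechanism (and a piece of the hypothesis) that the paper itself employs later in Theorem~\ref{main}, so your addition is consistent with the paper's framework and arguably makes the argument rigorous where the paper is formal. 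Your remark on instantaneous smoothing near $t=0$ is likewise a point the paper glosses over; both proofs are really valid for $t\ge\delta>0$ and extended by parabolic regularization.
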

\begin{proof}
Let us take inner product of \eqref{phi1} with $ \phi_t$, \eqref{sigma1} with $\sigma_t$ and \eqref{p1} with $p_t$ and sum up to obtain
\begin{align}\label{e7a}
   \|\phi_t\|^2 +  \|\sigma_t\|^2 +  \|p_t\|^2 & + \frac{1}{2}   \frac{\d}{\d t}\left(\lambda \| \nabla \phi\|^2+ \min \{\eta,\gamma_h\} \|\sigma\|_V^2+ \min \{D,\gamma_p\} \| p\|_V^2 \right)  \no \\ &\leq \int_{\Omega} 2\phi (1-\phi)f(\phi,\sigma,u) \phi_t \d x  \no \\ 
  & +  \int_{\Omega} \{\gamma_c \left(\sigma -\frac{\S_c}{\gamma_c}\right) \phi + \gamma_h \left( \sigma -\frac{\S_h}{\gamma_h}\right)(1-\phi) \} \sigma_t \d x \no \\
& -  \int_{\Omega} s \phi \sigma_t  \d x - \int_{\Omega} \{\alpha_h(1-\phi)+\alpha_c \phi\} p_t \d x  \no \\ &\leq 2  \|f\|_{L^\infty}^2  + \frac{\|\phi_t\|^2}{2}   \no \\ 
  & + \frac{1}{4} \int_{\Omega} \{\gamma_c \left(\sigma -\frac{\S_c}{\gamma_c}\right) \phi + \gamma_h \left( (\sigma -\frac{\S_h}{\gamma_h}\right)(1-\phi) \}^2 \d x  + \frac{\|\sigma_t\|^2}{2}\no \\
& + \|s\|_{L^\infty}^2 + \frac{1}{2} \int_{\Omega} \{\alpha_h(1-\phi)+\alpha_c \phi\}^2 \d x + + \frac{\|p_t\|^2}{2} \leq C .
\end{align}
Therfore after integrating we get
$\phi_t \in L^2(0,t;H)$ for all $t\in [0,T]$.
We finally get $\phi \in W^{1,2}(0,t;H)$. Using the Lemma \ref{lem3.6} we get $\phi(t) \rightarrow 0$ as $t$ goes to $\infty$.
\end{proof}

\section{Convergence to equilibrium}
One can observe from \cite{LG} that, all the estimates proved in the paper are uniform over the given time interval $(0,T)$, where $T>0$. Hence, standard extension arguments imply that, after taking the limit with respect to the approximation parameter, we obtain a global in time solution.

Thus we can study now the long time behavior of the system \eqref{phi1}-\eqref{initial1} as $t \rightarrow \infty$. We will show that a weak solution of \eqref{phi1}-\eqref{initial1} converges in a proper sense to a strong solution of the following steady state system. The system is given by
\begin{subequations}
		\begin{align}
	             - \eta \Delta \sigma_\infty + \gamma_h \sigma_\infty  &= S_h , \label{ssigma}\ \text{ in }\ \Omega,\\
		- D \Delta p_\infty + \gamma_p p_\infty  &= \alpha_h , \label{sp}\ \text{ in }\ \Omega,\\
		\phi_\infty &=0 , \ \text{ in }\ \Omega,\\
		\frac{\partial \sigma_\infty}{\partial \nu} = \frac{\partial p_\infty}{\partial \nu} &= 0, \text{ on }\ \partial\Omega. \label{sboundary}
		\end{align}   
	\end{subequations}
	
	\begin{definition}\label{weak1}
	A weak solution to the system \eqref{ssigma}-\eqref{sboundary} is $(\sigma_\infty,p_\infty)$ with
	\begin{align}
	&\sigma_{\infty} \in V \no  \\
	& p_{\infty} \in V,
	\end{align}
	and it satisfies
	\begin{align}\label{e10b}
(\eta \nabla \sigma_\infty,\nabla u) + (\gamma_h \sigma_\infty,u) &= (S_h,u), \ \forall u\in V, \no \\
 (D \nabla p_\infty ,\nabla v)+ (\gamma_p p_\infty,v)  &=  (\alpha_h,v), \ \forall v\in V.
\end{align} 
	\end{definition}
	\begin{definition}\label{strong}
	A weak solution to  \eqref{ssigma}-\eqref{sboundary} is called a strong solution if $(\sigma_\infty,p_\infty) \in H^2(\Omega) \times H^2(\Omega)$.
	\end{definition}

First we show that the stationary problem has at least a strong solution. Let us define the functional 
\begin{equation}\label{func}
\Gamma(u,v)= \int_\Omega \left(\frac{\eta}{2}|\nabla u|^2 + \frac{\gamma_h}{2} |u|^2 - S_h u\right) \d x + \int_\Omega \left(\frac{D}{2}|\nabla v|^2 + \frac{\gamma_p}{2} |v|^2- \alpha_h v \right) \d x,
\end{equation}
for any $u,v \in V.$
\begin{theorem}\label{exist}
Let $(\sigma_\infty,p_\infty) \in H^2(\Omega) \times H^2(\Omega)$ be a strong solution of the system \eqref{ssigma}-\eqref{sboundary}, then $(\sigma_\infty,p_\infty)$ is a critical point of the functional \eqref{func} over $V \times V$. Conversely, if $(\sigma_\infty,p_\infty)$ is a critical point of the functional \eqref{func} over $V \times V$, then $(\sigma_\infty,p_\infty) \in H^2(\Omega) \times H^2(\Omega)$ and it is a strong solution of the system \eqref{ssigma}-\eqref{sboundary}.
\end{theorem}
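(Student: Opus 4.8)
The plan is to prove Theorem \ref{exist} as a routine instance of the equivalence between critical points of a convex quadratic functional and weak (then strong) solutions of its Euler--Lagrange system, upgraded via elliptic regularity. A convenient feature is that $\Gamma$ splits as a sum of a functional depending only on $u$ and one depending only on $v$, so the two equations decouple and may be treated independently; I will spell out the argument for the $\sigma_\infty$-component, the $p_\infty$-component being identical with $(\eta,\gamma_h,S_h)$ replaced by $(D,\gamma_p,\alpha_h)$.

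First I would compute the Gateaux derivative of $\Gamma$ at a point $(\sigma_\infty,p_\infty)\in V\times V$. Since $\Gamma$ is quadratic, differentiation is immediate and gives, in an arbitrary direction $(u,v)\in V\times V$,
\begin{align*}
\langle \Gamma'(\sigma_\infty,p_\infty),(u,v)\rangle &= \int_\Omega \left(\eta\,\nabla\sigma_\infty\cdot\nabla u + \gamma_h\sigma_\infty u - S_h u\right)\d x \\
&\quad + \int_\Omega \left(D\,\nabla p_\infty\cdot\nabla v + \gamma_p p_\infty v - \alpha_h v\right)\d x.
\end{align*}
Hence $(\sigma_\infty,p_\infty)$ is a critical point of $\Gamma$ over $V\times V$ if and only if it satisfies precisely the weak formulation \eqref{e10b} of Definition \ref{weak1}. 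This identification reduces the theorem to the equivalence between weak and strong solutions.

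For the direct implication, suppose $(\sigma_\infty,p_\infty)\in H^2(\Omega)\times H^2(\Omega)$ solves \eqref{ssigma}--\eqref{sboundary} in the strong sense. Applying Green's formula to $\int_\Omega \eta\,\nabla\sigma_\infty\cdot\nabla u\,\d x$ produces a boundary term $\int_{\partial\Omega}\eta\,\frac{\partial\sigma_\infty}{\partial\nu}\,u\,\d S$, which vanishes by the Neumann condition \eqref{sboundary}; substituting $-\eta\Delta\sigma_\infty=S_h-\gamma_h\sigma_\infty$ from \eqref{ssigma} then yields exactly the weak identity, so the derivative above is zero and $(\sigma_\infty,p_\infty)$ is a critical point.

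For the converse, a critical point satisfies \eqref{e10b}. Testing with $u\in C_c^\infty(\Omega)$ shows that $-\eta\Delta\sigma_\infty+\gamma_h\sigma_\infty=S_h$ holds in the sense of distributions; since $\gamma_h\sigma_\infty-S_h\in L^2(\Omega)$, this gives $\Delta\sigma_\infty\in L^2(\Omega)$, and classical elliptic regularity on the sufficiently smooth domain $\Omega$ upgrades $\sigma_\infty$ to $H^2(\Omega)$. Integrating by parts once more, now legitimately, and using the interior equation, the weak identity collapses to $\int_{\partial\Omega}\eta\,\frac{\partial\sigma_\infty}{\partial\nu}\,u\,\d S=0$ for all $u\in V$; the density of traces of $H^1(\Omega)$ in $L^2(\partial\Omega)$ then forces the Neumann condition $\frac{\partial\sigma_\infty}{\partial\nu}=0$, completing the strong solution. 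The main obstacle is this converse regularity step: one must invoke elliptic regularity up to the boundary and justify the trace/density argument that recovers the Neumann condition, both of which rely on the assumed smoothness of $\partial\Omega$; everything else is the formal quadratic computation.
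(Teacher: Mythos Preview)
Your proposal is correct and follows essentially the same route as the paper: compute the Gateaux derivative of the quadratic functional $\Gamma$, identify its vanishing with the weak formulation \eqref{e10b}, and then invoke elliptic regularity for the converse direction. If anything, your argument is slightly more complete than the paper's, since you explicitly recover the Neumann boundary condition via the trace/density step, whereas the paper simply appeals to elliptic regularity after obtaining the weak formulation.
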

\begin{proof}
Since $(\sigma_\infty,p_\infty) \in H^2(\Omega) \times H^2(\Omega)$ be a strong solution of the system \eqref{ssigma}-\eqref{sboundary}, therefore we get from the weak formulation \eqref{e10b} and integrating by parts that
\begin{align}
(\eta \nabla \sigma_\infty,\nabla u) + (\gamma_h \sigma_\infty,u) + (D \nabla p_\infty ,\nabla v)+ (\gamma_p p_\infty,v)  = (S_h,u)+ (\alpha_h,v),
\end{align} 
which, by a straightforward calculation, can be written as
\begin{align}\label{e10}
\lim_{\epsilon \rightarrow 0} \frac{d \Gamma(\sigma_\infty +\epsilon u,p_\infty+ \epsilon v)}{d \epsilon} = 0.
\end{align}
Therefore we get from \eqref{e10} that $(\sigma_\infty,p_\infty)$ is a critical point of the functional \eqref{func}.

Conversely let us take $(\sigma_\infty,p_\infty)$ is a critical point of the functional \eqref{func}.
Similarly by taking the Gateaux derivative of \eqref{func} in the direction of $(u,v)$ i.e.
\begin{align}\label{e11}
\lim_{\epsilon \rightarrow 0} \frac{\Gamma(\sigma_\infty +\epsilon u,p_\infty+ \epsilon v) - \Gamma(\sigma_\infty ,p_\infty)}{\epsilon}= 0.
\end{align}
From \eqref{e11}, using integration by parts we get we get for all $u,v \in V$
\begin{align}\label{e12a}
\epsilon(\eta \nabla \sigma_\infty,\nabla u) &+ \frac{\eta}{2}\epsilon^2 \|\nabla u\|^2 + \epsilon(\gamma_h \sigma_\infty,u) + \frac{\eta_h}{2}\epsilon^2 \| u\|^2+\epsilon (D \nabla p_\infty ,\nabla v)+ \frac{D}{2} \epsilon^2 \|\nabla v\|^2 + \epsilon(\gamma_p p_\infty,v) \no \\
&+ \frac{\gamma_p}{2} \epsilon^2 \|v\|^2 - \epsilon(S_h,u)- \epsilon(\alpha_h,v)=0.
\end{align}
Now dividing \eqref{e12a} by $\epsilon$ and taking $\epsilon$ to $0$ we finally get for all $u,v \in V$
\begin{align}\label{e13a}
(\eta \nabla \sigma_\infty,\nabla u) + (\gamma_h \sigma_\infty,u) + (D \nabla p_\infty ,\nabla v)+ (\gamma_p p_\infty,v)= (S_h,u)+ (\alpha_h,v).
\end{align}
From \eqref{e13a} we can say that $(\sigma_\infty,p_\infty)$ is the weak solution of \eqref{ssigma}-\eqref{sboundary}. By the elliptic regularity theory we have $(\sigma_\infty,p_\infty) \in H^2(\Omega) \times H^2(\Omega)$.
\end{proof}

In the the next theorem we prove existence of the steady state solution. 
\begin{theorem}
The functional $\Gamma$ has a minimizer $(\sigma_\infty,p_\infty)$ i.e. the problem 
\begin{align}
\inf_{(u,v) \in V \times V} \Gamma(u,v)
\end{align}
has one solution $(\sigma_\infty,p_\infty)$. Hence there exist a strong solution for the steady state system \eqref{ssigma}-\eqref{sboundary}. 
\end{theorem}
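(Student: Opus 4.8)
The plan is to establish existence of a minimizer by the direct method of the calculus of variations, and then invoke Theorem \ref{exist} to conclude that this minimizer is a strong solution of \eqref{ssigma}-\eqref{sboundary}. The functional $\Gamma$ splits as a sum of two independent pieces, one depending only on $u$ and one only on $v$, so the infimum over $V \times V$ factors and each piece may be treated separately; each is a standard quadratic-plus-linear energy of the type arising from a linear elliptic problem with Neumann boundary conditions.

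First I would verify coercivity. Since $\eta, \gamma_h > 0$, the quadratic part $\frac{\eta}{2}\|\nabla u\|^2 + \frac{\gamma_h}{2}\|u\|^2$ is equivalent to $\frac{1}{2}\min\{\eta,\gamma_h\}\|u\|_V^2$, while the linear term is controlled via the Cauchy-Schwarz and Young inequalities by $\|S_h\|\,\|u\| \leq \frac{\gamma_h}{4}\|u\|^2 + C$. Hence $\Gamma(u,v) \to +\infty$ as $\|u\|_V + \|v\|_V \to \infty$, and in particular $\Gamma$ is bounded below on $V \times V$, so $I := \inf_{(u,v)} \Gamma(u,v)$ is finite. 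Let $(u_n, v_n)$ be a minimizing sequence. Coercivity gives a uniform bound on $\|u_n\|_V + \|v_n\|_V$, so by reflexivity of $V = H^1(\Omega)$ there is a subsequence with $u_n \rightharpoonup \sigma_\infty$ and $v_n \rightharpoonup p_\infty$ weakly in $V$.

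Next I would pass to the limit using weak lower semicontinuity. The quadratic gradient and zeroth-order terms $\|\nabla u\|^2$ and $\|u\|^2$ are convex and continuous, hence weakly lower semicontinuous on $V$; the linear terms $-(S_h, u)$ and $-(\alpha_h, v)$ are weakly continuous because $S_h, \alpha_h \in H$ and the pairing with a fixed $H$-element is weakly continuous. Therefore
\begin{align}
\Gamma(\sigma_\infty, p_\infty) \leq \liminf_{n \to \infty} \Gamma(u_n, v_n) = I,
\end{align}
which forces $\Gamma(\sigma_\infty, p_\infty) = I$, so $(\sigma_\infty, p_\infty)$ is a minimizer. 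Being a minimizer, it is in particular a critical point of $\Gamma$ over $V \times V$, so Theorem \ref{exist} applies and yields both $(\sigma_\infty, p_\infty) \in H^2(\Omega) \times H^2(\Omega)$ and the fact that it solves \eqref{ssigma}-\eqref{sboundary} in the strong sense.

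There is no serious obstacle here, as the problem is a decoupled pair of coercive convex quadratic functionals; the only point requiring a modicum of care is confirming coercivity in the full $V$-norm rather than merely in the gradient seminorm. This is where the strict positivity of the reaction coefficients $\gamma_h$ and $\gamma_p$ is essential: without the zeroth-order terms one would control only $\|\nabla u\|$ and would need a Poincaré-type inequality, which fails on $V = H^1(\Omega)$ under pure Neumann conditions. With $\gamma_h, \gamma_p > 0$ in hand, coercivity in $\|\cdot\|_V$ is immediate and the direct method goes through without difficulty.
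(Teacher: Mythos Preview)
Your proof is correct and follows essentially the same approach as the paper: the direct method of the calculus of variations, establishing coercivity to bound a minimizing sequence, extracting a weak limit in $V\times V$, and passing to the limit via weak lower semicontinuity before invoking Theorem~\ref{exist}. Your handling of coercivity via Young's inequality is in fact cleaner than the paper's bound, which as written appears to require the additional (unstated) sign conditions $\gamma_h/2 > S_h$ and $\gamma_p/2 > \alpha_h$.
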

\begin{proof}
The functional $\Gamma$ is bounded from below. Therefore there exist a minimizing sequence $(u_n,v_n) \in V \times V$ such that
\begin{align}\label{e12}
\lim_{n \rightarrow \infty} \Gamma(u_n,v_n) = \inf_{(u,v) \in V \times V} \Gamma(u,v).
\end{align} 
Claim 1- We need to show $(u_n,v_n)$ is bounded in $V \times V$. 

From \eqref{func} we derive that
\begin{align}\label{e13}
\Gamma(u_n,v_n)&= \int_\Omega \left(|\nabla u_n|^2 + \frac{\gamma_h}{2} |u_n|^2 - S_h u_n\right) \d x + \int_\Omega \left(|\nabla v_n|^2 + \frac{\gamma_p}{2} |v_n|^2- \alpha_h v_n \right) \d x \no \\
& \quad \geq \|\nabla u_n\|^2 +\frac{\gamma_h}{2} \|u_n\|^2 - S_h \|u_n\|^2 + \|\nabla v_n\|^2 + \frac{\gamma_p}{2} \|v_n\|^2- \alpha_h \|v_n\|^2  \no \\
& \quad \geq \min\{ 1, \left(\frac{\gamma_h}{2} -S_h\right)\} \|u_n\|^2_{H^1(\Omega)} + \min\{ 1, \left(\frac{\gamma_p}{2} -\alpha_h\right)\} \|v_n\|^2_{H^1(\Omega)} \no \\
& \quad \geq C\left( \|u_n\|^2_{H^1(\Omega)} + \|v_n\|^2_{H^1(\Omega)} \right).
\end{align}
We have from \eqref{e12} that the convergent sequence $\Gamma(u_n,v_n)$ is bounded above. Thus it follows from \eqref{e13} that the sequence $(u_n,v_n)$ is bounded in $V \times V$. Then we will get convergent subsequence still denoted by $(u_n,v_n)$. Let us take the limit $(\sigma_\infty,p_\infty)$.

Claim 2- $(\sigma_\infty,p_\infty)$ is a minimizer of the functional $\Gamma$. 

Since the functional $\Gamma$ is continuous and convex. Therefore $\Gamma$ is weakly lower semicontinuous. So For a weakly convergent sequence i.e.
\begin{align}
(u_n,v_n) \rightharpoonup (\sigma_\infty,p_\infty),
\end{align}
we have
\begin{align}
\Gamma(\sigma_\infty,p_\infty) \leq \liminf \Gamma(u_n,v_n).
\end{align}
Therefore we get 
\begin{align}
\inf_{(u,v) \in V \times V} \Gamma(u,v) \leq \Gamma(\sigma_\infty,p_\infty)  \leq \liminf \Gamma(u_n,v_n) \leq \lim \Gamma(u_n,v_n) \leq \inf_{(u,v) \in V \times V} \Gamma(u,v) .
\end{align}
It proves that $(\sigma_\infty,p_\infty)$ is a minimizer of the functional $\Gamma$.
\end{proof}

Now we prove that the long time convergence result in the following theorem.
\begin{theorem}\label{main}
For any $\phi_0 \in L^2(\Omega)$, $\sigma_0 \in L^2(\Omega)$ and $p_0 \in L^2(\Omega)$, the system \eqref{phi1}-\eqref{initial1} admits a unique global weak solution $(\phi,\sigma,p)$ such that 
\begin{eqnarray}\label{e49}
\lim_{t \rightarrow \infty} \left( \|\phi(t)\|_{L^2(\Omega)} + \|\sigma(t)-\sigma_\infty\|_{L^2(\Omega)} + \|p(t)-p_\infty\|_{L^2(\Omega)} \right) =0,
\end{eqnarray}
where $(\sigma_\infty,p_\infty)$ satisfies \eqref{e10b}.

We show that if $\lambda\lambda_1 \geq \frac{|\gamma_{ch}|^2|\sigma_\infty|^2}{\gamma_h} + \frac{|\alpha_{ch}|^2}{2\gamma_p}+2\|f\|_{L^\infty}$, then there exist a constant $\beta >0$ and $C >0$ such that
\begin{align}
\|\phi(t)\|_{L^2(\Omega)} + \|\sigma(t)-\sigma_\infty\|_{L^2(\Omega)} + \|p(t)-p_\infty\|_{L^2(\Omega)} \leq C e^{-\beta t},
\end{align}
where $\lambda_1$ is defined in \eqref{lambda} and $C= \|\phi_0\|_{L^2(\Omega)} + \|\sigma_0-\sigma_\infty\|_{L^2(\Omega)} + \|p_0-p_\infty\|_{L^2(\Omega)}$ and
 
$\beta = \min\{\left(\lambda\lambda_1 - \frac{|\gamma_{ch}|^2|\sigma_\infty|^2}{\gamma_h} - \frac{|\alpha_{ch}|^2}{2\gamma_p}-2\|f\|_{L^\infty} \right),\frac{\gamma_p}{2},\frac{\gamma_h}{2}\}.$
\end{theorem}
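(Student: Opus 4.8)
The plan is to work directly with the three difference quantities. Because the stationary system forces $\phi_\infty = 0$ and the linear elliptic problems \eqref{ssigma}, \eqref{sp} with Neumann conditions have a unique strong solution (the minimizer produced in the preceding two theorems), the limit pair $(\sigma_\infty,p_\infty)$ is well defined. Setting $\bar\sigma := \sigma - \sigma_\infty$ and $\bar p := p - p_\infty$ and keeping $\phi$ itself, I subtract the steady-state identities \eqref{e10b} from \eqref{sigma1} and \eqref{p1} to obtain, in the weak sense,
\begin{align*}
\bar\sigma_t - \eta\Delta\bar\sigma + \gamma_h\bar\sigma &= -\gamma_{ch}\sigma\phi + (S_{ch}-s)\phi, \\
\bar p_t - D\Delta\bar p + \gamma_p\bar p &= \alpha_{ch}\phi,
\end{align*}
together with the unchanged equation \eqref{phi1} for $\phi$. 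Global existence and uniqueness of the weak solution follow from the well-posedness recalled in Section 2 together with the uniform-in-time bounds of Theorem \ref{thm3.4}, which permit the usual extension to $[0,\infty)$, while Theorem \ref{sigmabound} keeps $0 \leq \sigma \leq \tilde\sigma$.

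For the qualitative statement \eqref{e49} I would use that $\phi(t) \to 0$ in $L^2(\Omega)$, already established above through Lemma \ref{lem3.6}. Testing the $\bar\sigma$-equation with $\bar\sigma$ and using $0 \leq \phi \leq 1$, $0 \leq \sigma \leq \tilde\sigma$ and $s \in L^\infty(Q_T)$, Young's inequality yields a differential inequality of the form $\frac{\d}{\d t}\|\bar\sigma\|^2 + \gamma_h\|\bar\sigma\|^2 \leq C\|\phi\|^2$, and testing the $\bar p$-equation with $\bar p$ gives $\frac{\d}{\d t}\|\bar p\|^2 + \gamma_p\|\bar p\|^2 \leq C\|\phi\|^2$. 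Since the right-hand sides vanish as $t \to \infty$, the elementary fact that $y' + cy \leq g$ with $c > 0$ and $g(t) \to 0$ implies $y(t) \to 0$ gives $\|\bar\sigma(t)\|, \|\bar p(t)\| \to 0$, which is precisely \eqref{e49}.

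For the convergence rate I would introduce the single functional $E(t) := \frac{1}{2}\big(\|\phi\|^2 + \|\bar\sigma\|^2 + \|\bar p\|^2\big)$, test \eqref{phi1} with $\phi$, the $\bar\sigma$- and $\bar p$-equations with $\bar\sigma$ and $\bar p$, and sum. The nonlinear term is bounded by $\big|\int_\Omega 2\phi^2(1-\phi)f\,\d x\big| \leq 2\|f\|_{L^\infty}\|\phi\|^2$ using $0 \leq \phi(1-\phi) \leq 1$; writing $\sigma = \sigma_\infty + \bar\sigma$, the coupling term $-\gamma_{ch}\sigma_\infty\phi\bar\sigma$ and the term $\alpha_{ch}\phi\bar p$ are handled by Young's inequality, spending half of $\gamma_h\|\bar\sigma\|^2$ and half of $\gamma_p\|\bar p\|^2$ and depositing $\frac{|\gamma_{ch}|^2|\sigma_\infty|^2}{\gamma_h}\|\phi\|^2$ and $\frac{|\alpha_{ch}|^2}{2\gamma_p}\|\phi\|^2$ onto the $\phi$-norm. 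Invoking the Poincaré inequality \eqref{lambda}, $\|\nabla\phi\|^2 \geq \lambda_1\|\phi\|^2$ (legitimate since $\phi \in V_0 = H^1_0(\Omega)$), I would collect everything into
\begin{align*}
\frac{\d}{\d t}E + \Big(\lambda\lambda_1 - \tfrac{|\gamma_{ch}|^2|\sigma_\infty|^2}{\gamma_h} - \tfrac{|\alpha_{ch}|^2}{2\gamma_p} - 2\|f\|_{L^\infty}\Big)\|\phi\|^2 + \frac{\gamma_h}{2}\|\bar\sigma\|^2 + \frac{\gamma_p}{2}\|\bar p\|^2 \leq 0.
\end{align*}
Under the hypothesis the bracket is nonnegative, so $\frac{\d}{\d t}E + 2\beta E \leq 0$ with $\beta$ as stated, and Gronwall's lemma gives $E(t) \leq E(0)e^{-2\beta t}$; taking square roots and summing the three components produces the asserted bound with $C = \|\phi_0\| + \|\sigma_0 - \sigma_\infty\| + \|p_0 - p_\infty\|$.

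The main obstacle is the rate step: one must confirm that, after the Young splittings, the quadratic form in $(\|\phi\|, \|\bar\sigma\|, \|\bar p\|)$ is genuinely dissipative, which is exactly where the coefficient condition is used. The two delicate points are the sign of $\gamma_{ch}$ in the absorbed quantity $-\gamma_{ch}\phi\bar\sigma^2$ and the residual forcing $(S_{ch}-s)\phi\bar\sigma$; both must be dominated by the remaining dissipation $\gamma_h\|\bar\sigma\|^2$ and by the Poincaré margin on $\phi$, using $0 \leq \phi \leq 1$, so that no uncontrolled constant survives in the final inequality.
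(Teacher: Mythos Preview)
Your argument for the exponential rate is essentially identical to the paper's: define the differences, test each equation with the corresponding unknown, sum, estimate the five coupling terms by Young's inequality, convert $\lambda\|\nabla\phi\|^2$ into $\lambda\lambda_1\|\phi\|^2$ via Poincar\'e, and close with Gronwall. The paper organizes the right-hand side as $I_1,\dots,I_5$ and arrives at exactly your final differential inequality and the same value of $\beta$.

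There is one genuine difference in how the unconditional limit \eqref{e49} is obtained. The paper proves \emph{only} the energy inequality leading to \eqref{e48} and then declares that this ``gives'' \eqref{e49}; in other words, the paper treats the qualitative convergence as a by-product of the exponential estimate and does not argue it separately. Your route---invoking the already-established fact $\phi(t)\to 0$ from Lemma~\ref{lem3.6}, and then propagating this through the linear $\bar\sigma$- and $\bar p$-equations via the elementary ODE lemma $y'+cy\le g$, $g\to 0$ $\Rightarrow$ $y\to 0$---is a legitimate alternative and has the advantage of not requiring the coefficient hypothesis for the qualitative statement. This is a cleaner reading of the theorem as stated.

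Your closing remarks about the ``delicate points'' are well taken and in fact touch on places where the paper itself is loose: the term $-\gamma_{ch}\phi\bar\sigma^2$ (the paper's $I_2$) contributes $|\gamma_{ch}|\,\|\bar\sigma\|^2$ in \eqref{e46} but silently disappears in \eqref{e47}, and the contribution $\frac{2(|S_{ch}|^2+\|s\|_{L^\infty}^2)}{\gamma_h}\|\phi\|^2$ from $I_4$ never enters the stated coefficient condition or the formula for $\beta$. You are right that both must be absorbed either by the residual $\gamma_h\|\bar\sigma\|^2$ or by the Poincar\'e margin on $\phi$; the paper simply drops them. So your caution here is warranted rather than excessive.
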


\begin{proof}Let us denote $w(t)=\phi(t)$, $y(t)=\sigma(t)-\sigma_\infty$ and $z(t)=p(t)-p_\infty$. The system satisfied by $(w,y,z)$ is 
\begin{subequations}
		\begin{align}
		w_t -\lambda \Delta w + 2w (1-w) f(w,y+\sigma_\infty,u) &= 0, \label{w}\ \text{ in }\ Q_T:=(0,\infty)\times\Omega,\\
				y_t - \eta \Delta y + \gamma_h y + \gamma_{ch}yw+ \gamma_{ch} \sigma_\infty w &=  (S_c-S_h)w - sw, \label{y}\ \text{ in }\ Q_T,\\
		z_t - D \Delta z + \gamma_p z  &= (\alpha_c-\alpha_h)w, \label{z}\ \text{ in }\ Q_T,\\
		w = 0, \ \frac{\partial y}{\partial \nu} &= \frac{\partial z}{\partial \nu} = 0, \text{ on }\ \Sigma_T:=(0,\infty)\times\partial\Omega, \label{boundary1}\\
		w(0) &= w_0, \  \ y(0) =  y_0, \ z(0)=z_0, \text{ in } \ \Omega. \label{initial2}
		\end{align}   
	\end{subequations}
	
	Taking inner product of \eqref{w} with $w$, \eqref{y} with $y$ and \eqref{z} with $z$ we get
	\begin{align}\label{e40}
	\frac{\d}{\d t} &\left( \|w\|^2 + \|y\|^2 + \|z\|^2 \right) + \lambda \|\nabla w\|^2 + \eta \|\nabla y\|^2 + D \|\nabla z\|^2 + \gamma_h \|y\|^2 + \gamma_p \|z\|^2 \nonumber \\
	 &= -\int_\Omega 2w^2(1-w) f(w,y+\sigma_\infty,u) - \int_\Omega \gamma_{ch}y^2 w - \int_\Omega \gamma_{ch} \sigma_\infty wy \nonumber \\
	 &+ \int_\Omega (S_{ch}- s)wy + \int_\Omega \alpha_{ch} wz \nonumber \\
	 &= \sum_{i=1}^5 I_i
	\end{align} 
	Now we will estimate the r.h.s of \eqref{e40} by using H\"older's, Poincar\'e inequality and Young's inequalities and derive
	\begin{align}\label{e41}
	|I_1| \leq \Big|\int_\Omega 2w^2(1-w) f(w,y+\sigma_\infty,u) \d x \Big| \leq 2\|f\|_{\infty} \|w\|^2,
	\end{align}
	\begin{align}\label{e42}
	|I_2| \leq \Big|\int_\Omega \gamma_{ch}y^2 w \d x \Big| \leq |\gamma_{ch}|\|y\|^2,
	\end{align}
	\begin{align}\label{e43}
	|I_3| \leq \Big|\int_\Omega \gamma_{ch} \sigma_\infty wy \d x \Big| \leq |\gamma_{ch}||\sigma_\infty||w||y| \leq \frac{|\gamma_{ch}|^2|\sigma_\infty|^2}{\gamma_h}\|w\|^2 + \frac{\gamma_h}{4}\|y\|^2,
	\end{align}
	\begin{align}\label{e44}
	|I_4| \leq \Big|\int_\Omega (S_{ch}- s)wy \d x \Big| \leq (|S_{ch}|+\|s\|_{L^{\infty}}|)|w||y| \leq \frac{2(|S_{ch}|^2+\|s\|_{L^{\infty}}^2)}{\gamma_h}\|w\|^2 + \frac{\gamma_h}{4}\|y\|^2,
	\end{align}
	\begin{align}\label{e45}
	|I_5| \leq \Big|\int_\Omega \alpha_{ch} wz \d x \Big| \leq |\alpha_{ch}||w||z| \leq \frac{|\alpha_{ch}|^2}{2\gamma_p}\|w\|^2 + \frac{\gamma_p}{2}\|z\|^2.
	\end{align}
	Now adding \eqref{e41}-\eqref{e45} and substituting in \eqref{e40} we get
	\begin{align}\label{e46}
	\frac{\d}{\d t} &\left( \|w\|^2 + \|y\|^2 + \|z\|^2 \right) + \lambda \|\nabla w\|^2 + \eta \|\nabla y\|^2 + D \|\nabla z\|^2 + \frac{\gamma_h}{2} \|y\|^2 + \frac{\gamma_p}{2} \|z\|^2 \nonumber \\
	 &\leq 2\|f\|_{L^\infty} \|w\|^2 + |\gamma_{ch}|\|y\|^2 + \frac{|\gamma_{ch}|^2|\sigma_\infty|^2}{\gamma_h}\|w\|^2 + \frac{|\alpha_{ch}|^2}{2\gamma_p}\|w\|^2.
	 \end{align}
	 By using Poincar\'e inequality we further derive
	 \begin{align}\label{e47}
	\frac{\d}{\d t} &\left( \|w\|^2 + \|y\|^2 + \|z\|^2 \right) + \left(\lambda\lambda_1 - \frac{|\gamma_{ch}|^2|\sigma_\infty|^2}{\gamma_h} - \frac{|\alpha_{ch}|^2}{2\gamma_p} -2\|f\|_{L^\infty} \right) \|w\|^2 \nonumber \\
	&+ \eta \|\nabla y\|^2 + D \|\nabla z\|^2 + \frac{\gamma_h}{2} \|y\|^2 + \frac{\gamma_p}{2} \|z\|^2 \leq 0,
	 \end{align}
	 where $\lambda_1$ is defined in \eqref{lambda}.
	 Using Gronwall's Lemma we finally obtain
	 \begin{align}\label{e48}
	 &\|w\|^2 + \|y\|^2 + \|z\|^2  \leq \left( \|w_0\|^2 + \|y_0\|^2 + \|z_0\|^2 \right)e^{-\beta t} . 
	 \end{align}
	 where $\beta = \min\{\left(\lambda\lambda_1 - \frac{|\gamma_{ch}|^2|\sigma_\infty|^2}{\gamma_h} - \frac{|\alpha_{ch}|^2}{2\gamma_p}-2\|f\|_{L^\infty} \right),\frac{\gamma_p}{2},\frac{\gamma_h}{2}\}.$
	 
	 It gives the equality \eqref{e49} which completes the proof .
\end{proof}

\end{document}